\newtheorem{theorem}{Theorem} 	      	      	                              
\newtheorem{definition}[theorem]{Definition} 	      	      	                
\newtheorem{remark}[theorem]{Remark}                                          
\newcommand{\thistheoremname}{}
\newtheorem*{theorem*}{Theorem}
\newtheorem*{genericthm*}{\thistheoremname}
\newenvironment{namedthm*}[1]
  {\renewcommand{\thistheoremname}{#1}%
   \begin{genericthm*}}
  {\end{genericthm*}}
\numberwithin{equation}{section}                                              
\numberwithin{theorem}{section}                                               
\numberwithin{figure}{section}                                                
\newcommand{\mf}[1]{\mathfrak{#1}}                                            
\newcommand{\mc}[1]{\mathcal{#1}}                                             
\newcommand\numberthis{\addtocounter{equation}{1}\tag{\theequation}}
\newcommand{\mb}[1]{\mathbb{#1}}
\newcommand{\N}{\mathbb{N}}                                                   
\newcommand{\R}{\mathbb{R}}                                                   
\newcommand{\rd}{{\rm d}}
\newcommand{\U}{\mathfrak{U}}
\newcommand{\D}{\mathfrak{D}}
\newcommand{\Rmn}{\mathbb{R}^{m+n}}
\newcommand{\X}{\mathcal{X}}
\newcommand{\nasla}{\slashed{\nabla}}                                         
\newcommand{\nt}{\numberthis}
\newcommand{\p}{\partial}
\newcommand{\nb}{\nabla}
\begin{document}

\title[Uniqueness for ultrahyperbolic equations]{Uniqueness of solution for ultrahyperbolic equations \\with lower order terms}

\author{Vaibhav Kumar Jena}
\address{Department of Mathematics, Indian Institute of Science,\\
Bangalore, 560012, Karnataka, India.}
\email{vaibhavkuma1@iisc.ac.in}

\begin{abstract}
In this article, we prove a variety of uniqueness results for ultrahyperbolic equations with general space and time dependent lower order terms. We address the problem of determining uniqueness of solutions from boundary data as well as when the data is prescribed on an interior subset. Furthermore, we also present the case when the domain may change with respect to any one time component and obtain analogous results. Our main tool for this purpose is Carleman estimate. We obtain different uniqueness results depending on the location of the reference point for the Carleman estimate relative to the domain.
\end{abstract}

\maketitle

\section{Introduction}

In this article, we are going to address some uniqueness results for ultrahyperbolic equations. Such an equation is posed on domains in \(\mathbb{R}^m \times \mathbb{R}^n\), for \( m,n \in \mathbb{N}\).  When we consider the case of $\R^1\times\R^n$ operators, we have the standard wave operator given by
\[ \mc{L} := - \p_t^2 + \Delta_x = -\p_t^2 + \sum_{j=1}^n \p_{x_j}^2.\]
A natural generalisation of the above expression to the case of multiple time dimensions is given by the ultrahyperbolic operator, which has the form
\[\square := -\Delta_t + \Delta_x = - \sum_{i=1}^m \p_{t_i}^2 + \sum_{j=1}^n \p_{x_j}^2 = - \p_{t_1}^2 - \p_{t_2}^2 - \cdots - \p_{t_m}^2 + \p_{x_1}^2 + \p_{x_2}^2 + \cdots + \p_{x_n}^2. \]
We are going to show suitable uniqueness results for the ultrahyperbolic equation  with lower-order terms that depend on both space and time. We address the case when data is prescribed on the boundary and also when it is given on an interior subset. Moreover, we will also consider the case when the domain is time dependent.

\subsection{Literature}

The main motivation for studying this problem was to study its controllability properties. However, the first obstacle in this direction is that such equations are usually not well-posed. Hence, as a first step, we decided to study a part of the well-posedness issue, that is, uniqueness of solution.

Ultrahyperbolic equations have generated significant interest in the Physics community. In particular, the concept of multiple temporal dimensions is important in the field of string theory. We refer to the article by \emph{Tegmark} \cite{MR1441890} for a good introduction on this topic. For further reading on applications of the ultrahyperbolic equation in Physics, see \cite{MR1878197, foster2010physicstimedimensions, MR3736101} and the references therein.

In the mathematical community there has been less development, primarily due to the issue of well-posedness.  The first work in this direction was by \emph{John} \cite{MR1546052}, who showed that the set of solutions of the ultrahyperbolic equation in $\R^2\times\R^2$, in appropriate spaces, is actually the range of an integral transform known as the X-ray transform, up to a suitable factor.
Next, \emph{Courant-Hilbert} \cite{MR140802} used the mean value theorem of \emph{\'Asgeirsson} \cite{MR1513094} to show that existence of solution for the ultrahyperbolic equation fails if the initial data is not properly posed. \emph{H\"ormander} \cite{MR1851002} later generalised the \'Asgeirsson's mean value theorem to solutions of the inhomogeneous equation.

The work by \emph{Owens} \cite{MR19825} gives sufficient conditions for uniqueness of solutions for the homogeneous equation for mixed problems having elliptic and hyperbolic nature. Also see other work by the same author for some related uniqueness problems \cite{MR49460, MR120453}. In \emph{Murray-Protter} \cite{MR348288}, the authors study the asymptotic behaviour of the ultrahyperbolic equation with lower order terms. As for the uniqueness problem, they only solve the problem with a zeroth order term, they do not consider the case when a first order term is present. We point out that, this proof involves the use of a Carleman-type estimate. In \emph{Diaz-Young} \cite{MR283394}, the authors consider an equation of the type
\[ \sum_{i=1}^m \p_{t_i}^2 y - \sum_{j,k=1}^n \p_{x_k} (a_{jk} \p_{x_j} y) + c y = 0, \]
and give a necessary and sufficient condition for uniqueness that relates the temporal dimension `$m$' with a certain eigenvalue problem posed in $G$. Also see \cite{MR364887} for a related result involving multiple singular lines in the equation. The work by \emph{Burskii-Kirichenko} \cite{MR2432862} solves a uniqueness problem for the equation, without any lower order terms, in a ball by giving a condition in terms of zeroes of Jacobi polynomials.

Also see \emph{Craig-Weinstein} \cite{MR2534852}, where the authors show that the initial value problem is well posed, subject to a non-local constraint, when the initial data is given on a co-dimension one hypersurface. They also show non-uniqueness results when initial data is given on higher co-dimension surfaces. 

Another set of references has to do with inverse problems for ultrahyperbolic equations: see \cite{MR1861908,MR3227743,MR4068237,MR4669095}. For some results related to ultrahyperbolic equations using representation theory see \cite{MR2020552,MR2998915}. For work on ultrahyperbolic Schr\"odinger equation we refer to \cite{MR4038128}.

\subsection{Main results}
To present our main result, we provide some definitions. 
\begin{definition}
Let $T>0$ and let $G$ be a hypercube of side $T$ in $\R^m$, that is, 
\[G:= \{ (t_1,\ldots,t_m) \in \R^m | -T < t_i < T, \text{ for } 1 \leqslant i \leqslant m \} \subset \R^m.\]
Let $\Omega \subset \R^n$ be a non-empty, open, and bounded subset. Then we consider a domain of the form $\U := G \times \Omega \subset \Rmn$.
\end{definition}

In this work, we are exclusively going to deal with the boundary of the spatial part of $\U$.
Hence, for convenience we will write $\p\U:= G \times \p\Omega$. We will not be dealing with the temporal boundary of $\U$ in this work. This is because our assumption on $T$ ensures that we do not encounter the temporal part of the boundary of $\U$, see \cref{rem_Tlarge}. Moreover, as we are not looking to address initial value problems this is justified. 

Now we will state a less precise version of our main result as it is more insightful. It is also easier to represent this result visually. As per the author's knowledge, the most general case that can be represented in images is the case when $t\in \R$ and $x \in \R^2$; see \cref{fig1} and \cref{fig2} for the boundary case. For higher dimensions, the figures are too complex to visualise.

\begin{definition}
Fix $ p:= (t(p),x(p)) \in \R^{m+n}$, and define the function $f_p$ and the region $\D_p$ as
\[f_p := \frac{1}{4} \left( |x-x(p)|^2 - |t-t(p)|^2 \right), \quad \mf{D}_p := \{ f_p > 0 \}.\]
Let $\mc{N}$ denote the outward pointing unit normal of $\U$. Define the region 
\[\Gamma_p := \p \U \cap \D_p \cap \{ \mc{N} f_p > 0 \}.\]
and let $\Theta_p$ be any neighbourhood of $\bar{\Gamma}_p$ in $\p\U$. 
For any $y \in \R^n$ and $\sigma > 0 $, we define the $\sigma$-neighbourhood of $y$ in $\R^n$ as follows
\[\mc{O}_\sigma(y) := \{ y_1 \in \R^n : \|y_1-y\|_{\R^n} < \sigma\} \subset \R^n.\]
Then let us define the following
\[\mc{O}_\sigma (\Gamma_p) := \bigcup_{(\tau,y) \in \Gamma_p} \left( \{ \tau \} \times \mc{O}_\sigma(y) \right) \subset \Rmn.\]
Finally, we define the interior set $W_p \subset \U$ to be any subset that satisfies $W_p \supset \overline{\mc{O}_\sigma (\Gamma_p)} \cap \U$.
\end{definition}
\begin{definition}
Let $\X \in C^\infty(\bar{\U};\Rmn) $ be such that 
$\X$ identically vanishes on $\U\cap \p\D_p$. Furthermore, let $V \in C^\infty (\bar{\U})$, $F \in C^\infty(\bar{\U})$ and $H \in C^\infty(\bar{G})$. 
We say that $z$ is a solution of the ultrahyperbolic equation if $z \in C^2(\U)$ and
\begin{equation}\label{eq_main_z_A}
\begin{rcases}
\square z + \nabla_\mc{X} z + V(t,x) z = F(t,x), \qquad & \text{in } \U,\\
z(t,0) = H(t) , \qquad & \text{on } \p\U.
\end{rcases}
\end{equation}
\end{definition}

\begin{theorem} \label{thm_prelim}
Let $T>0$ be sufficiently large. Let $z_1,z_2 \in C^2(\U)$ be two solutions of system \eqref{eq_main_z_A}. Then we have the following three cases for uniqueness
\begin{enumerate}
\item If $\mc{N} z_1 = \mc{N} z_2$ on $\Theta_p$, then $z_1=z_2$ on $\U \cap \D_p$.

\item If $(z_1,\nb_t z_1 ) = (z_2,\nb_t z_2 )$ on $W_p$, then $z_1=z_2$ on $\U \cap \D_p$.

\item If $(z_1,\nb_x z_1 ) = (z_2,\nb_x z_2 )$ on $W_p$, then $z_1=z_2$ on $\U \cap \D_p$.

\end{enumerate}
\end{theorem}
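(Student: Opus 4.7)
The plan is to reduce all three assertions to a single Carleman estimate for $\square$ with weight built from $f_p$. Setting $z := z_1 - z_2$, the difference solves the homogeneous problem
\[
\square z + \nb_{\X} z + V z = 0 \text{ in } \U, \qquad z = 0 \text{ on } \p\U,
\]
and the three hypotheses additionally give (1) $\mc{N}z = 0$ on $\Theta_p$, (2) $z = \nb_t z = 0$ on $W_p$, or (3) $z = \nb_x z = 0$ on $W_p$. The target is $z \equiv 0$ on $\U\cap\D_p$. The function $f_p$ is the natural pseudoconvex candidate, since $\operatorname{Hess}(f_p) = \tfrac{1}{2}\operatorname{diag}(-1,\ldots,-1,1,\ldots,1)$ is (a multiple of) the principal symbol matrix of $\square$ and $\square f_p = (n-m)/2$ is constant. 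I would take as Carleman weight $e^{2\lambda\varphi}$ for a suitable monotone function $\varphi$ of $f_p$ (the standard convexification promotes the weakly pseudoconvex $f_p$ to a strongly pseudoconvex weight for $\square$ on $\D_p$), yielding an estimate of the shape
\[
\int_{\U \cap \D_p} e^{2\lambda\varphi}\bigl(\lambda^3 z^2 + \lambda|\nb z|^2\bigr)\,dV \lesssim \int_{\U\cap\D_p} e^{2\lambda\varphi}|\square z|^2\,dV + \mc{B}_{\p\U} + \mc{B}_{\p\D_p},
\]
valid for $\lambda\gg 1$, where $\mc{B}_{\p\U}$ is a boundary form on $\p\U\cap\D_p$ whose leading piece (after imposing $z|_{\p\U}=0$) is $\int (\mc{N}z)^2\,\mc{N}\varphi\,e^{2\lambda\varphi}\,dS$, and $\mc{B}_{\p\D_p}$ is a term on $\U\cap\p\D_p$ that stays finite because $\X$ vanishes there. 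The condition that $T$ is sufficiently large ensures $\bar{\D}_p\cap(\p G\times\Omega) = \emptyset$, so no temporal boundary contribution enters.

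For case (1), the remaining boundary integrand $(\mc{N}z)^2\,\mc{N}\varphi$ carries the favourable sign on $\p\U\cap\D_p\setminus\Gamma_p$: there $\mc{N}f_p\leqslant 0$ by the definition of $\Gamma_p$, and (with $\varphi$ tuned so it decreases in $f_p$) this forces $\mc{N}\varphi\geqslant 0$, so the term can be moved to the left-hand side. On the remaining piece $\Gamma_p\subset\Theta_p$, the assumption $\mc{N}z_1 = \mc{N}z_2$ kills the integrand outright. Absorbing $|\nb_\X z|^2$ and $|Vz|^2$ into the Carleman gain (which works for $\lambda$ large because $\X$ vanishes on $\U\cap\p\D_p$, exactly where the weight is most singular, and $V$ is bounded) and letting $\lambda\to\infty$ forces $z\equiv 0$ on $\U\cap\{f_p\geqslant\epsilon\}$ for every $\epsilon>0$, hence on all of $\U\cap\D_p$.

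For cases (2) and (3) one no longer controls $\mc{B}_{\p\U}$ on $\Gamma_p$ from boundary data, so the idea is to localise: pick a cutoff $\chi\in C^\infty(\bar\U)$ equal to $1$ on $\U\cap\{f_p\geqslant\epsilon\}\setminus\overline{\mc{O}_{\sigma/2}(\Gamma_p)}$, supported away from $\mc{O}_{\sigma/4}(\Gamma_p)$ and from $\{f_p\leqslant\epsilon/2\}$, and apply the Carleman estimate to $\chi z$, which is now compactly supported in $\U\cap\D_p$ and thus has trivial boundary contribution. The commutator $[\square,\chi]z = -2(\nb_t\chi)\cdot(\nb_t z) + 2(\nb_x\chi)\cdot(\nb_x z) + (\square\chi)z$ is supported in two strips: one sits inside $\mc{O}_\sigma(\Gamma_p)\cap\U\subset W_p$, where the assumption in (2), respectively (3), kills the temporal, respectively spatial, piece of the commutator (in fact $z\equiv 0$ on this open set forces all of $\nb z$ to vanish, subsuming both hypotheses); the other strip sits near $\{f_p=\epsilon/2\}$, where $e^{2\lambda\varphi}$ is pointwise smaller than on the bulk $\{f_p\geqslant\epsilon\}$ by a factor like $e^{-c\lambda}$ and so is absorbed by taking $\lambda$ large. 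Letting $\lambda\to\infty$ and then $\epsilon\to 0$ yields the conclusion. The principal obstacle is the Carleman estimate itself: verifying strong pseudoconvexity of $\varphi$ for the ultrahyperbolic symbol uniformly on $\D_p$, tracking the degeneration of $\varphi$ and of $\X$ near $\p\D_p$ sharply enough that $\nb_\X z$ can be absorbed even where the weight is singular, and quantitatively balancing $\lambda$, $\epsilon$, $\sigma$, and the convexification parameter so that the commutator errors in (2)–(3) really do land on the left-hand side.
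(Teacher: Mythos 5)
Your overall architecture matches the paper's: reduce to $z=z_1-z_2$ solving the homogeneous problem, run a Carleman estimate with a weight built from $f_p$, discard the boundary term off the observation region by sign, kill it on the observation region by hypothesis, and absorb $|\nb_\X z|^2$ using the vanishing of $\X$ near $\p\D_p$ together with the degeneration of the weight there (the paper makes this quantitative by splitting $\U\cap\D_p$ into $\{f_p\leqslant\mu\}$, where $\X\equiv 0$, and $\{f_p>\mu\}$, where $-u_p,v_p\gtrsim\mu/R_+$). For the interior cases your route is genuinely different: the paper proves an interior Carleman estimate by bounding the boundary term $\int_{\Gamma_p^\varepsilon}\zeta|\mc{N}z|^2$ \emph{from above} by interior quantities over $W_p^\varepsilon$ via a multiplier argument ($\rho\,\zeta\, h z$ with $h$ extending $\mc{N}$, plus a second cutoff to trade $|\nb_x z|^2$ for $|\nb_t z|^2$ and $z^2$), whereas you excise a neighbourhood of $\Gamma_p$ with a cutoff and push the error into a commutator supported in $W_p$. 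That commutator scheme is workable in principle, but note that $\chi z$ is \emph{not} compactly supported in $\U\cap\D_p$: $\chi=1$ on the rest of $\p\U\cap\D_p$, so the boundary term there survives and you must still invoke the sign argument of case (1) on $\p\U\cap\D_p\setminus\Gamma_p$.

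The genuine gap is the Carleman estimate itself. You propose the weight $e^{2\lambda\varphi}$ with $\varphi$ a monotone (convexifying) function of $f_p$, asserting that convexification promotes the weakly pseudoconvex $f_p$ to a strongly pseudoconvex weight. But a reparametrization $\varphi=\psi(f_p)$ does not change the level sets of $f_p$, and the paper's point (and the reason Theorem 1.4(1) is stated with a \emph{neighbourhood} $\Theta_p$ of $\bar\Gamma_p$ rather than $\Gamma_p$ itself) is that the level sets of $f_p$ fail to be pseudoconvex for $\square$ -- only barely, but failure is failure, and convexification cannot repair non-pseudoconvex level sets. The weight actually used, $\zeta^p_{a,b;\varepsilon}$ in \eqref{eq.carleman_weight}, perturbs $f_p$ to $f_p/\bigl((1+\varepsilon u_p)(1-\varepsilon v_p)\bigr)$, which deforms the level sets; this is what produces the modified boundary quantity $(1-\varepsilon r_p)\mc{N}f_p+\varepsilon f_p\mc{N}r_p$ and hence the region $\Gamma_p^\varepsilon$, which need not be contained in $\Gamma_p$ (it can include points where $\mc{N}f_p=0$ but $\mc{N}r_p>0$). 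Your argument never uses the slack between $\Gamma_p$ and $\Theta_p$, which is the tell-tale sign that this step is missing: with your unperturbed weight, the "favourable sign off $\Gamma_p$" step and the positivity of the bulk term cannot both be established. (Relatedly, your monotonicity is inverted: for the boundary term $\int(\mc{N}z)^2\,\mc{N}\varphi$ on the right-hand side to be discardable where $\mc{N}f_p\leqslant 0$, you need $\varphi$ \emph{increasing} in $f_p$, consistent with $\Gamma_p=\{\mc{N}f_p>0\}$ being the bad region.) The remaining ingredients of your proposal -- absorption of $V$, the $\mu$-splitting for $\X$, and the reduction of hypotheses (2)--(3) to $z\equiv 0$ on the open set $W_p$ -- are sound.
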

The precise version of the above result is given in \cref{thm_uc_bdry_Dp} for the boundary case and \cref{thm_uc_int} for the interior cases. We now discuss a bit about the assumptions of the above theorem and the proof techniques.

\begin{figure}
    \centering
\includegraphics[height=5cm]{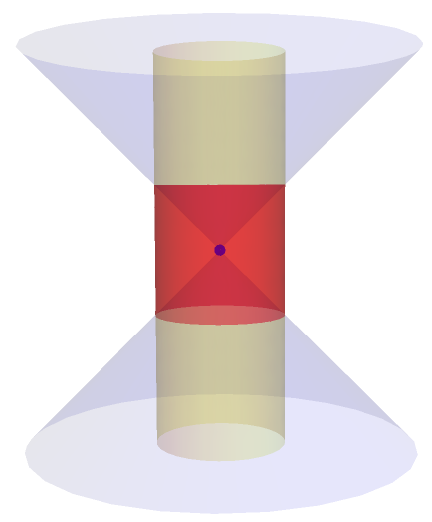} \hspace{5mm}
\includegraphics[height=5cm]{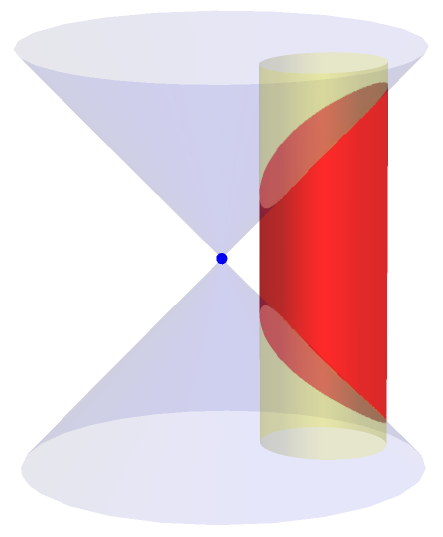} \hspace{5mm}
\includegraphics[height=5cm]{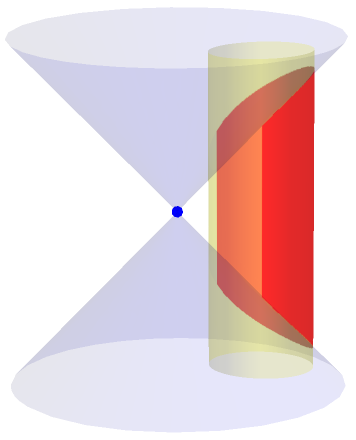}

\caption{This set of figures is for the time static domain and the case $(t,x) \in \R \times \R^2$. In all three cases, the blue vertex denotes the point $p:=(t(p),x(p)$ and the boundary of the domain $\U$, given by $\p\U$, is represented by combining the yellow and the red shaded regions. In the first image, we have $p \in \U$ and the red region denotes both $\p\U\cap\D_p$ and $\Gamma_p$, as they coincide. In the second image, the point $p \notin \bar{\U}$ and the red region denotes $\p\U \cap \D_p$. Finally, in the third image the point $p \notin \bar{\U}$ and the red region now denotes $\Gamma_p$, which is a proper subset of $\p\U\cap \D_p$.}
    \label{fig1}
\end{figure}

\begin{remark} \label{rem_Tlarge}
Let us explain the reasoning behind the assumption on $T$ being sufficiently large. In our work the key region of interest is $\U \cap \D_p$. Since we are not given any information about the ``initial data" for $z$ we want to avoid any contributions coming from the temporal part of the boundary of the domain. That is, we want the boundary of $\U \cap \D_p$ to be made of $\U \cap \p\D_p$ and $\p\U \cap \D_p$. This is ensured by assuming $T$ to be sufficiently large. In particular, if we let $R:= \sup_{\U\cap\D_p} |x-x(p)|$ and take $T>R$, our result holds.
\end{remark}

The main tool to prove our uniqueness result is Carleman estimate \cite{MR334}, which are integral inequalities used to show suitable unique continuation properties for PDEs. These estimates have applications in many areas--namely in control theory \cite{MR1809954, MR1406566, MR1290492, MR2361985, MR3964826, MR4450884, MR4797140} and inverse problems \cite{MR3460049, MR1804796}. We are going to use the Carleman estimate from \cite{MR4314050}, which proves a boundary Carleman estimate for ultrahyperbolic operators, given in \cref{thm.carl_bdry}. We will use this estimate to prove the uniqueness from the boundary data. Moreover, using \cref{thm.carl_bdry}, we derive an interior Carleman estimate \cref{thm_carl_int}, that is subsequently used to prove uniqueness of solution from interior data.

\begin{remark}
We need $\X$ to vanish on $\U\cap\p\D_p$, that is,
\begin{equation} \label{assump_X}
\X (t,x) = 0, \quad \text{for all } (t,x) \in \U \cap \{|t-t(p)|^2=|x-x(p)|^2\}.
\end{equation}
This is due to a technical issue while proving uniqueness of solution via the Carleman estimate. In particular, on the boundary of the cone given by $\p\D_p$, $f_p$ identically vanishes, which implies that our Carleman weight itself vanishes on $\p\D_p$. Hence, close to $\p\D_p$, we cannot appropriately absorb the contribution of the first order terms coming from $\nb_\X z$. However, under assumption \eqref{assump_X}, we do not get any such bad contribution due to $\X$ near $\p\D_p$. This point becomes clear when we look at the analysis after \cref{eq_uni_pf_3}.
\end{remark}

The key inspiration for the region $\Gamma_p$ is an analogous condition present in the works of \emph{Ho} \cite{MR838598} and \emph{Lions} \cite{MR931277}, which address boundary controllability of (free) wave equations. This region was generalised to wave equations with space-time dependent lower order terms, on general time dependent domains, in the work of \cite{MR3964826}. 

The region $\Gamma_p$ given above is different from the region $\Gamma_p^\varepsilon$ (depending on a parameter $\varepsilon>0$) we have in the precise results \cref{thm_uc_bdry_Dp} and its precursor \cref{thm.carl_bdry}. This is because the level sets of the function $f_p$ barely fail to be pseudoconvex with respect to the ultrahyperbolic operator; pseudoconvexity is a necessary criterion for proving a Carleman estimate. Thus, one needs to work with a perturbation of $f_p$, which leads to the different functional used to define $\Gamma_p^\varepsilon$. This perturbation of $f_p$ is used to define the Carleman weight in \eqref{eq.carleman_weight}; see \cite{MR4314050} for a detailed discussion. Our precise result is better than \cref{thm_prelim} because we only need to know the data on $\Gamma_p^\varepsilon$ to get uniqueness, rather than $\Theta_p$, which is a proper superset of $\Gamma_p^\varepsilon$. The same correspondence holds for the interior case, $W_p$ in \cref{thm_prelim} relates to $W_p^\varepsilon$ in \cref{thm_carl_int} and \cref{thm_uc_int}. The crucial point is that $\Gamma_p$ and $\Gamma_p^\varepsilon$ can be made arbitrarily close to each other by choosing suitable $\varepsilon$; see \cref{rem_compare}. We avoid presenting the result with the optimal regions $\Gamma_p^\varepsilon$ and $W_p^\varepsilon$ as it would require providing technical definitions and it is also difficult to represent this region visually.

The uniqueness we obtain is restricted to $\U \cap \D_p$, which is the part of the domain that lies in the exterior of the null cone centred at $(t(p),x(p))$. When we compare this with wave equations, such a uniqueness result would mean that we have uniqueness for waves in the whole domain, because waves are already known to be uniquely determined in the region inside the cone \cite{MR1625845}. However, such a result does not hold for the ultrahyperbolic equation due to the absence of a well posedness theory. In particular, we are lacking an analogue of energy estimate for \eqref{eq_main_z_A}. In the presence of a suitable energy estimate one can use similar ideas as in \cite{MR4450884, MR4797140} to get uniqueness in whole of $\U$. However, the perturbation of $f_p$ that we use, is pseudoconvex even inside the null cone.

For the reader's convenience, we initially present the case of the time static domain $\U=G\times \Omega$. In \cref{sec_timedep} we will generalise the domain $\U$ to time dependent domains. In particular, we will consider the case when the domain may change with respect to any one time component, say $t_1$. However, the analysis for the static case, given in \cref{sec_CE} and \cref{sec_uni}, is written in a very general manner and it is directly applicable to the time dependent domain case of \cref{sec_timedep}. Hence, depending on the context, our proof can be adapted to the required setting. For instance, in the most general case the normal is given by 
\[\mc{N} = \sum_{i=1}^m \nu^{t_i} \p_{t_i} + \sum_{j=1}^n \nu^{x_j} \p_{x_j}.\]
In the time static case, we get $\mc{N} = \sum_{j=1}^n \nu^{x_j} \p_{x_j}$, which is the usual Euclidean setting. Thus, for this case $\mc{N}z = \nu^x \cdot z$. For the time dependent domains considered in \cref{sec_timedep}, we have the form
\[\mc{N} = \nu^{t_1} \p_{t_1} + \sum_{j=1}^n \nu^{x_j} \p_{x_j}.\]
We show the action of $\mc{N}$ in \cref{eq_N_static} for static domains and \cref{eq_N_moving} for time dependent domains.

\subsection{Features}
We obtain uniqueness of solution for ultrahyperbolic equations, in $\Rmn$ for any $m,n \in \N$, with the following features.
\begin{enumerate}

\item This is the first result that addresses uniqueness of ultrahyperbolic equation with general zeroth and first order terms. That is $V$ and $\X$ are dependent on both space and time. The result that comes the closest is that of \cite{MR348288}, which considers the equation without any first order term. Adding a first order term that is dependent on both space and time introduces additional challenges and one has to be careful with the analysis to manage the contribution of such a term. For details, see the proof following equation \eqref{eq_uni_pf_3}.

\item We obtain a novel interior Carleman estimate for ultrahyperbolic equations, with general lower order terms.
 
\item We address uniqueness from the boundary data as well as when data is given in an interior subset of the domain. To the author's knowledge, this is the first result that addresses uniqueness from interior data for ultrahyperbolic equations.

\item This is the first work that studies ultrahyperbolic equation on domains that may change with respect to time. We assume that the change can occur along any one temporal coordinate, say, the first time component $t_1$. 

\item Finally, our results are based on coordinate system centred around any arbitrary point $p:=(t(p),x(p)) \in \Rmn$. Hence, depending on where the point $p$ lies with respect to the domain $\U$, we get different uniqueness results. The point $p$ is also the reference point about which we apply the Carleman estimate. Moreover, this is the first result that obtains uniqueness for such equations when the reference point is outside the domain.

\end{enumerate}

\begin{figure}
    \centering
\includegraphics[height=5cm]{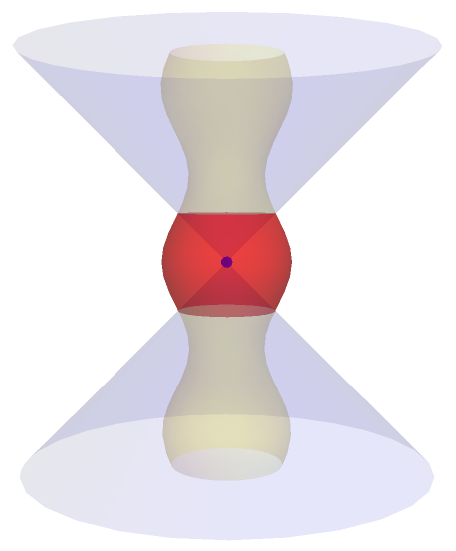} \hspace{5mm}
\includegraphics[height=5cm]{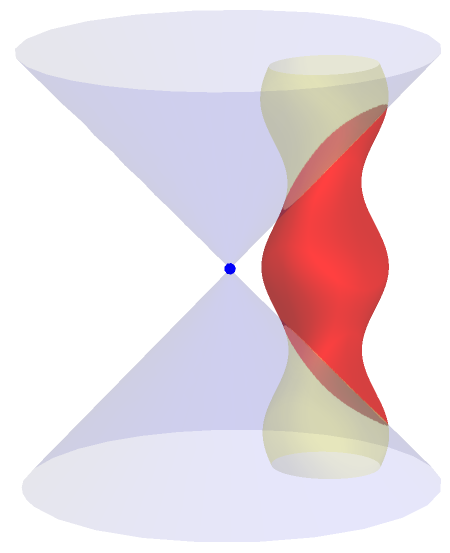} \hspace{5mm}
\includegraphics[height=5cm]{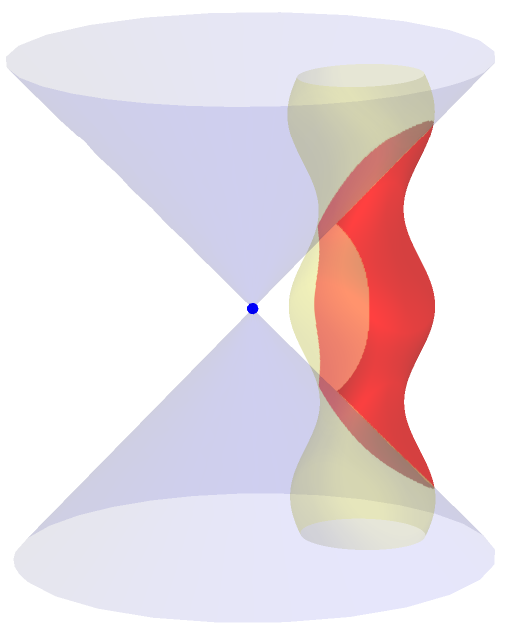}

\caption{This set of figures is for the case when the domain $\U$ is time dependent and in the setting of $(t,x) \in \R \times \R^2$. In the first image, $p \in \U$ and the red portion denotes $\p\U \cap \D_p$ as well as $\Gamma_p$. The second image is when $p \notin \bar{\U}$ and here the red part denotes $\p\U \cap \D_p$. The third image is when $p \notin \bar{\U}$ and the red part denotes $\Gamma_p$.}
    \label{fig2}

\end{figure}

The main benefit of our method is that we use Carleman estimates, which have the advantage of being applicable to a wide class of PDEs. In particular, they can be applied when the PDE has space-time dependent lower order terms. This allows us to consider the uniqueness problem for the ultrahyperbolic equation with general lower order terms. Moreover, we consider some appropriately defined cut-off functions that allow us to estimate the boundary term in the Carleman estimate of \cref{thm.carl_bdry}, by suitable terms restricted to the interior of the domain. This allows us to obtain a novel interior Carleman estimate and, subsequently, uniqueness from interior data as well.

Finally, our analysis is based on estimates that are derived using pseudo-Riemannian geometry techniques, which allows us to treat the space and time components at the same level. Hence, we can talk about domains that can also change with respect to time. However, we only address the case when the domain may change along $t_1$. This is because we treat the time dependent case by making a suitable coordinate transformation, and it is not clear what the transformation would be when the domain changes with respect to more than one temporal coordinate. Furthermore, according to the author's understanding, it is difficult to grasp what a domain changing along multiple time components looks like. This limits the lines of attack for the corresponding problem.

\subsection{Outline}
In Section \ref{sec_setting}, we present the geometric setting for the problem and provide some preliminary definitions.
In Section \ref{sec_CE}, we present the boundary Carleman estimate from \cite{MR4314050} and prove a novel interior Carleman estimate for ultrahyperbolic equations.
In Section \ref{sec_uni}, we prove the uniqueness results for the ultrahyperbolic system \eqref{eq_main_z_A}.
In \cref{sec_timedep}, we present the case of the time dependent domain and address how we can generalise the previous results to this setting.
In \cref{sec_conc} we mention some possible future directions of this work.

\section{Geometry of the domain} \label{sec_setting}
We begin by presenting the geometric background for the considered problem. One can see the following discussion to be appropriate generalisations of standard Minkowski geometry ideas to multiple time dimensions.
\begin{definition}
Let $m, n \in \mathbb{N}$ be fixed. Let $t := (t_1, \ldots,t_m)$ and $x:=(x_1,\ldots,x_n)$ denote the Cartesian coordinates on $\Rmn$. Consider the pseudo-Riemannian metric given by 
\begin{equation} \label{eq_g_metric}
g := -\rd t_1^2 -\rd t_2^2 - \cdots -\rd t_m^2 + \rd x_1^2 + \rd x_2^2 + \cdots + \rd x_n^2.
\end{equation}
Let $\tau := |t|$ and $r:= |x|$ denote the temporal and spatial radial functions, respectively. That is
\[ \tau = \sqrt{t_1^2 + \cdots + t_m^2}, \qquad r = \sqrt{x_1^2 + \cdots + x_n^2}.\]
Then the null coordinates $(u,v)$ are given by
\begin{equation} \label{eq_uv_def}
u := \frac{1}{2} (\tau-r), \qquad v:= \frac{1}{2} (\tau+r).
\end{equation}
Finally, we define the following function
\begin{equation} \label{eq_f_def}
f:= -uv = \frac{1}{4} (r^2-\tau^2) = \frac{1}{4} (|x|^2-|t|^2).
\end{equation}
\end{definition}
The function $f$ is essentially the distance function analogue, of the usual Euclidean distance, in the current geometry. The scaling factor of $\frac{1}{4}$ is present because it is more convenient to define $f=-uv$, as we will use this definition more than the expression of $f$ in terms of $(r,\tau)$ or $(x,t)$. 

Let $\mb{S}^{m-1}$ and $\mb{S}^{n-1}$ denote the unit sphere in $\R^m$ and $\R^n$, respectively.  On $\{ \tau \neq 0, r \neq 0\}$, we write $(\tau,r,\omega_x,\omega_t)$ to denote the standard Cartesian coordinates and $(u,v,\omega_x,\omega_t)$ to denote the null coordinates. Here $\omega_x$ is the spatial angular component taking values in $\mb{S}^{n-1}$ and $\omega_t$ is the temporal angular component with values in $\mb{S}^{m-1}$. Then we can write the metric given in \eqref{eq_g_metric} as
\[g = -d\tau^2 + dr^2 + r^2 \mathring{\gamma}_{\mathbb{S}^{n-1}} - \tau^2 \mathring{\gamma}_{\mathbb{S}^{m-1}} = -4dudv + r^2 \mathring{\gamma}_{\mathbb{S}^{n-1}} - \tau^2 \mathring{\gamma}_{\mathbb{S}^{m-1}},\]
where \( \mathring{\gamma}_{\mathbb{S}^{n-1}} \) and \( \mathring{\gamma}_{\mathbb{S}^{m-1}} \) denote the unit round metric on \( \mathbb{S}^{n-1} \) and \(  \mathbb{S}^{m-1} \), respectively.
\begin{definition}
Define the region $\D \subset \Rmn$ as $\D := \{ (t,x) \in \Rmn : f(t,x) > 0 \}.$
\end{definition}
Note that the boundary of $\D$, denoted by $\p\D$, is just the region $\{f=0\}$. Furthermore, using \eqref{eq_uv_def} and \eqref{eq_f_def} shows that we have the following estimates on $\D$
\begin{equation} \label{eq_uvf_est}
0<-u<r, \quad 0<v<r, \quad 0<f<r^2.
\end{equation}
The above estimates will be used repeatedly throughout the article.
To present our results in the most general form, we also define the above notions with respect to any arbitrary point $p \in \Rmn$. 

\begin{definition}
Fix a point $p:=(t(p),x(p)) \in \Rmn$. Define
\[t_p := t - t(p),\ \ x_p := x - x(p),\ \ r_p := |x_p|,\ \ \tau_p:=|t_p|,\ \ u_p := \frac{1}{2} (t_p - r_p),\ \ v_p := \frac{1}{2} (t_p + r_p).\]
We also define $ f_p := - u_p v_p$ and the region $\D_p := \{ f_p > 0 \}$. 
\end{definition}
With the above definition, we can write the metric $g$ as
\[ g = -dt_p^2 + dr_p^2 + r_p^2 \mathring{\gamma}_{\mathbb{S}^{n-1}} - \tau_p^2 \mathring{\gamma}_{\mathbb{S}^{m-1}} = -4 du_p dv_p + r_p^2 \mathring{\gamma}_{\mathbb{S}^{n-1}} - \tau_p^2 \mathring{\gamma}_{\mathbb{S}^{m-1}}. \]

We now mention some convention that will be used in the article. Lower case Greek letters \( (\alpha, \beta, \ldots)\), ranging from 1 to \(m+n\), denote space-time components in \(\Rmn\). Lower case Latin letters \( (a,b,\ldots)\), ranging from 1 to \(n-1\), denote spatial angular components corresponding to \(\omega_x \in \mathbb{S}^{n-1} \) in the above mentioned coordinate systems.
Lower case Latin letters \( (A,B,\ldots)\), ranging from 1 to \(n-1\), denote temporal angular components corresponding to \(\omega_t \in \mathbb{S}^{m-1} \) in the above mentioned coordinate systems. The symbol \( \nabla \) denotes the Levi-Civita connection with respect to \(g\). The symbols \( \nasla \) and  \( \tilde\nabla \) denote derivatives in the spatial and temporal angular components with respect to \(g\), respectively.
The ultrahyperbolic operator with respect to  \(g\) is given by \( \square := g^{\alpha\beta}  \nabla_{\alpha\beta} \).

\section{Carleman estimate} \label{sec_CE}
We first present a boundary Carleman estimate for the ultrahyperbolic operator (\cite{MR4314050}) that is also applicable on time-dependent domains. 

\begin{theorem}[Boundary Carleman Estimate] \label{thm.carl_bdry}
Let  \(\mf{U} \subset \R^{m+n}\) be such that for some \( R>0 \)
\[\mf{U} \cap \mf{D}_p \subseteq \{ r_p<R \}.\]
Also, assume that the boundary of \(\mf{U}\), denoted by \( \partial \mf{U} \), is smooth and timelike. Let \( \varepsilon, a, b >0 \) be constants such that:
\begin{equation}\label{eq.carleman_choices}
a \geqslant (m+n)^2, \qquad \varepsilon \ll_{m,n} b \ll R^{-1}.
\end{equation}
Then, there exists \( C >0 \) such that for any \(z \in \mc{C}^2({\mf{U}})\cap \mc{C}^1(\bar{\mf{U}}) \) with $z|_{\partial\mf{U} \cap \mf{D}_p} = 0$, we have 
\begin{align}\label{eq.carleman_est1}
& C\varepsilon \int_{\mf{U}\cap \mf{D}_p}\zeta_{a,b;\varepsilon}^p r_p^{-1}(|u_p \partial_{u_p} z|^2 + |v_p \partial_{v_p} z|^2 + f_p g^{ab}\slashed\nabla_a^p z \slashed\nabla_b^p z - f_p g^{CD} \tilde{\nabla}_C^p z \tilde{\nabla}_D^p z ) \\
&  + Cba^2\int_{\mf{U}\cap \mf{D}_p}\zeta_{a,b;\varepsilon}^p f^{-\frac{1}{2}} z^2 \leqslant \frac{1}{a}\int_{\mf{U}\cap \mf{D}_p} \zeta_{a,b;\varepsilon}^p f_p |\square z|^2 + C' \int_{\partial\mf{U}\cap \mf{D}_p} \zeta_{a,b;\varepsilon}^p [( 1 - \varepsilon r_p ) \mc{N} f_p + \varepsilon f_p \mc{N} r_p ] |\mathcal{N} z|^2, \notag
\end{align}
where \( \zeta_{a,b;\varepsilon}^p \) is the Carleman weight defined as
\begin{equation}
\label{eq.carleman_weight} \zeta_{ a, b; \varepsilon }^p := \left\{ \frac{ f_p }{ ( 1 + \varepsilon u_p ) ( 1 - \varepsilon v_p ) } \cdot \exp \left[ \frac{ 2 b f_p^\frac{1}{2} }{ ( 1 + \varepsilon u_p )^\frac{1}{2} ( 1 - \varepsilon v_p )^\frac{1}{2} } \right] \right\}^{2a} ,
\end{equation}
and \(\mc{N}\) is the outer-pointing unit normal of \(\mf{U}\) (with respect to \(g\)).
\end{theorem}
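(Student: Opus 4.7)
The plan is to establish a pointwise divergence identity of the form
\[ \zeta_{a,b;\varepsilon}^p \cdot f_p \cdot |\square z|^2 \;\geqslant\; \nabla_\alpha P^\alpha + B, \]
where $P^\alpha$ is a quadratic expression in $\nabla z$ and $B$ bounds the integrand on the left-hand side of \eqref{eq.carleman_est1} from below. Integration over $\mf{U}\cap\mf{D}_p$ followed by Stokes' theorem with respect to $g$ then yields \eqref{eq.carleman_est1}, with the boundary contribution $P^\alpha \mc{N}_\alpha$ on $\p\mf{U}\cap\mf{D}_p$ supplying the $|\mc{N}z|^2$ term. The pointwise identity itself will be produced by the classical multiplier method: a vector field multiplier $Xz$ together with a zeroth-order multiplier $yz$, applied to $\square z$ and then re-weighted by $\zeta_{a,b;\varepsilon}^p$ (equivalently, after conjugating $\square$ by a power of the Carleman weight and using the self-adjoint/anti-self-adjoint splitting on the conjugated operator).

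The natural multiplier adapted to $p$ is the scaling vector field
\[ X := -u_p \, \p_{u_p} - v_p \, \p_{v_p}, \qquad X f_p = 2 f_p, \]
which is collinear with $\nabla f_p$ and tangent to the null cone $\p\mf{D}_p$. Writing the Carleman phase as $\phi := -\tfrac{1}{2a}\log \zeta_{a,b;\varepsilon}^p$, one has $\phi \sim -\tfrac{1}{2}\log f_p - b f_p^{1/2}$ up to the $\varepsilon$-perturbation coming from $(1+\varepsilon u_p)(1-\varepsilon v_p)$. The Hessian of $\phi$ on the null plane drives the bulk terms produced by $X$: its null-direction entries, non-degenerate only because of the $\varepsilon$-perturbation, produce the $\varepsilon r_p^{-1}(|u_p \p_{u_p} z|^2 + |v_p \p_{v_p} z|^2)$ contribution; its angular part splits into a positive spatial piece giving $+\, f_p g^{ab} \slashed\nabla_a^p z \slashed\nabla_b^p z$ and a negative temporal piece giving $-\, f_p g^{CD} \tilde\nabla_C^p z \tilde\nabla_D^p z$, the opposite signs directly reflecting the ultrahyperbolic signature of $g$. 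The zeroth-order multiplier $y$ is chosen to cancel the cross terms in $\nabla_\alpha(\zeta^p P^\alpha)$; combined with two derivatives of the exponential factor $\exp[2 b f_p^{1/2}(\cdots)^{-1/2}]$ (each of order $a b f_p^{-1/2}$), it produces the $b a^2 \zeta_{a,b;\varepsilon}^p f_p^{-1/2} z^2$ contribution.

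The principal obstacle is the marginal failure of pseudoconvexity of $f_p$ with respect to $\square$: because $\square f_p$ is a signed constant and the Hessian of $f_p$ in the null frame is degenerate, the weight $f_p^{2a}$ alone produces a non-negative but not strictly positive bulk, and the $[(1+\varepsilon u_p)(1-\varepsilon v_p)]^{-2a}$ factor is precisely what repairs this degeneracy, at the cost of $O(\varepsilon)$ lower-order errors. The three-tier hierarchy in \eqref{eq.carleman_choices} is calibrated so that these absorptions succeed: the bound $r_p < R$ together with $b \ll R^{-1}$ keeps the exponential factor comparable to $1$ on $\mf{U}\cap\mf{D}_p$; the gap $\varepsilon \ll b$ lets the $b a^2$ zeroth-order term absorb the $O(\varepsilon a)$ errors generated by the perturbation; and $a \geqslant (m+n)^2$ absorbs the dimensional constants arising from $\square f_p \propto n-m$ and from the commutators generated by the conjugation. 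Finally, since $z$ vanishes on $\p\mf{U}\cap\mf{D}_p$ and $\p\mf{U}$ is timelike, all tangential derivatives of $z$ vanish there, so $P^\alpha \mc{N}_\alpha$ reduces to a scalar multiple of $|\mc{N}z|^2$; evaluating that multiple via $\mc{N}\phi$ — whose principal part $f_p^{-1}\mc{N} f_p$ combines with the $\varepsilon$-perturbation contribution $\varepsilon \mc{N}(v_p - u_p) = \varepsilon \mc{N} r_p$ — yields, after clearing denominators, the coefficient $(1-\varepsilon r_p)\mc{N} f_p + \varepsilon f_p \mc{N} r_p$ of $|\mc{N}z|^2$ in the boundary integral, completing the estimate.
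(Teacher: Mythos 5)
First, a point of comparison: the paper does not prove \cref{thm.carl_bdry} at all --- the estimate is imported wholesale from \cite{MR4314050}, so there is no in-paper argument to measure your proposal against. Your outline does describe the standard route by which such estimates are established (conjugation by the weight \eqref{eq.carleman_weight}, a scaling-type multiplier adapted to $f_p$, positivity of the conjugated bulk coming from the $\varepsilon$-perturbation that repairs the marginal pseudoconvexity of $f_p$, and reduction of the boundary flux to a multiple of $|\mc{N}z|^2$ using $z|_{\p\U\cap\D_p}=0$ and the timelike character of $\p\U$), and this is indeed the mechanism of the cited reference.

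As a proof, however, it has a genuine gap: every quantitatively decisive step is asserted rather than carried out. The entire content of a Carleman estimate lies in verifying that the bulk term $B$ in your claimed divergence identity is bounded below by the specific positive quantities on the left of \eqref{eq.carleman_est1} --- that after conjugation the Hessian of $\phi$ in the null directions really produces $\varepsilon r_p^{-1}(|u_p\p_{u_p}z|^2+|v_p\p_{v_p}z|^2)$ with a favourable sign, that the angular blocks carry the signs needed for the left-hand side to be nonnegative (recall $g^{CD}$ is negative definite, so the sign of that block must be checked, not read off from ``the ultrahyperbolic signature''), and that the zeroth-order multiplier yields $+\,ba^2 f_p^{-1/2}z^2$ rather than a term of indefinite sign. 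None of these is computed, and these are exactly the places where such arguments fail; as you yourself note, the unperturbed weight $f_p^{2a}$ gives only a degenerate bulk. The hierarchy \eqref{eq.carleman_choices} is likewise invoked without exhibiting the error terms it is supposed to absorb. There is also a sign slip in the one computation you do perform: with $f_p=-u_pv_p$ the field $X=-u_p\p_{u_p}-v_p\p_{v_p}$ satisfies $Xf_p=2u_pv_p=-2f_p$, not $2f_p$; since the orientation of the multiplier governs the sign of the boundary flux and hence of the coefficient $(1-\varepsilon r_p)\mc{N}f_p+\varepsilon f_p\mc{N}r_p$, this cannot be waved away. To turn the outline into a proof you would have to carry out the pointwise identity explicitly, or do what the paper does and cite \cite{MR4314050}.
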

The above estimate was derived in \cite{MR4314050}, and was used to solve a controllability problem for wave equations. In particular, to overcome a regularity issue encountered while showing controllability, it was essential to obtain a Carleman estimate for wave-type operators in $(t,x) \in \R^2 \times \R^n$. This proof was generalised to get a $(t,x) \in \R^m \times \R^n$ Carleman estimate (\cref{thm.carl_bdry}) purely as a theoretical concept and later the $m=2$ case was used to prove controllability. However, since we are dealing with ultrahyperbolic equations, we wish to use the general $\R^m \times \R^n$ Carleman estimate. Indeed, we will use it to prove uniqueness of solution of \eqref{eq_main_z_A} from boundary data, in \cref{thm_uc_bdry_Dp}. When we wish to show uniqueness from the interior data, we need to use an interior Carleman estimate, which is our next result \cref{thm_carl_int}. The key step in this direction is to estimate the boundary integral term present on the right hand side of \eqref{eq.carleman_est1} by terms restricted to the interior of the domain.
\begin{remark}
We have written the analysis in Sections \ref{sec_setting}-\ref{sec_uni} in a way that is also applicable to the time dependent domains described in \cref{sec_timedep}. This is done to avoid repeating the statements and proofs. However, one can simplify the expressions in Sections \ref{sec_setting}-\ref{sec_uni} by using the fact that we have a time static domain. For instance, the normal $\mc{N}$ actually reduces to $\displaystyle \sum_{j=1}^n \nu^{x_j} \p_{x_j}$. Then, we see that 
\begin{equation} \label{eq_N_static}
\mc{N} f_p = \frac{1}{4} \mc{N} (|x_p|^2 - |t_p|^2) = \frac{1}{4}\sum_{j=1}^n \nu^{x_j} \p_{x_j} (|x_p|^2 - |t_p|^2) = \frac{1}{4}\sum_{j=1}^n \nu^{x_j} \p_{x_j} |x_p|^2 = \frac{1}{2} \nu^x \cdot x_p.
\end{equation}
Contrast this with the computations in \cref{eq_N_moving} for the time dependent case.
\end{remark}

\begin{definition} 
Let $\varepsilon > 0$ be fixed and define $\Gamma_p^\varepsilon$, a subset of $ \partial \mf{U} \cap \mf{D}_p$ as follows
\begin{equation} \label{eq_def_Gamma_MB}
\Gamma_p^\varepsilon  := \partial \mf{U} \cap \mf{D}_p \cap \{ ( 1 - \varepsilon r_p ) \mc{N} f_p + \varepsilon f_p \mc{N} r_p > 0 \} .
\end{equation}
For any $y \in \R^n$ and $\sigma > 0 $, we define the $\sigma$-neighbourhood of $y$ in $\R^n$ as follows
\[\mc{O}_\sigma(y) := \{ y_1 \in \R^n : \|y_1-y\|_{\R^n} < \sigma\} \subset \R^n.\]
Then, we define the $\sigma$-neighbourhood of the set $\Gamma_p^\varepsilon$ as 
\[\mc{O}_\sigma(\Gamma_p^\varepsilon) := \bigcup_{(t,y) \in \Gamma_p^\varepsilon} \Big(  \{ t \} \times \mc{O}_\sigma(y) \Big) \subset \R^{m+n}.\]
Finally, we define the interior region $W_p^\varepsilon$ as 
\begin{equation} \label{eq_def_W_pe}
W_p^\varepsilon := \mc{O}_\sigma (\Gamma_p^\varepsilon) \cap (\mf{U} \cap \mf{D}_p).
\end{equation}
\end{definition}

\begin{theorem}[Interior Carleman Estimate] \label{thm_carl_int}
Let the hypothesis of Theorem \ref{thm.carl_bdry} be satisfied. Additionally, also assume that $a \gg R$.
Then, there exists \( C >0 \) such that for any \(z \in \mc{C}^2({\mf{U}})\cap \mc{C}^1(\bar{\mf{U}}) \) with $z|_{\partial\mf{U} \cap \mf{D}_p} = 0$, we have 
\begin{align}\label{eq.carleman_est}
 & C \varepsilon \int_{\mf{U}\cap \mf{D}_p} \zeta_{a,b;\varepsilon}^p r_p^{-1}(|u_p \partial_{u_p} z|^2 + |v_p \partial_{v_p} z|^2 + f_p g^{ab}\slashed\nabla_a^p z \slashed\nabla_b^p z - f_p g^{CD} \tilde{\nabla}_C^p z \tilde{\nabla}_D^p z ) \\
&  + Cba^2\int_{\mf{U}\cap \mf{D}_p}\zeta_{a,b;\varepsilon}^p f^{-\frac{1}{2}} z^2 \leqslant \frac{1}{a}\int_{\mf{U}\cap \mf{D}_p} \zeta_{a,b;\varepsilon}^p f_p |\square z|^2 + a R^2 \int_{W_p^\varepsilon} \zeta_{a,b;\varepsilon}^p f_p^{-1} |\nb_t z|^2 + a^4 R^4 \int_{W_p^\varepsilon} \zeta_{a,b;\varepsilon}^p f_p^{-3} |z|^2. \notag
\end{align}
\end{theorem}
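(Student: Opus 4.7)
The plan is to deduce the interior estimate \eqref{eq.carleman_est} from the boundary estimate \eqref{eq.carleman_est1} by replacing the boundary integral on its right-hand side with volume integrals supported in $W_p^\varepsilon$. A first reduction is immediate: by the defining condition \eqref{eq_def_Gamma_MB}, the coefficient $\mu:=(1-\varepsilon r_p)\mc{N}f_p+\varepsilon f_p\mc{N}r_p$ is non-positive on $\p\mf{U}\cap\mf{D}_p\setminus\Gamma_p^\varepsilon$, so the boundary integral is dominated by its restriction to $\Gamma_p^\varepsilon$; moreover, using $f_p\le r_p^2\le R^2$, $|\nb f_p|\lesssim R$, and $\varepsilon R\ll 1$, one has $\mu\lesssim R$ on $\Gamma_p^\varepsilon$.

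Next I would introduce a smooth cutoff $\chi\in C^\infty(\bar{\mf{U}})$ with $\chi\equiv 1$ in a neighborhood of $\Gamma_p^\varepsilon$ inside $\bar{\mf{U}}$ and $\mathrm{supp}(\chi)\cap\mf{U}\subset W_p^\varepsilon$, together with a smooth extension $Y$ of the unit normal $\mc{N}$ to a neighborhood of $\mathrm{supp}(\chi)$ satisfying $Y|_{\p\mf{U}}=\mc{N}$. Because $z|_{\p\mf{U}\cap\mf{D}_p}=0$, the tangential gradient of $z$ vanishes on $\p\mf{U}\cap\mf{D}_p$, so $\nb z=(\mc{N}z)\mc{N}$ and $|\nb z|_g^2=(\mc{N}z)^2$ there. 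Applying the divergence theorem to the vector field $\chi^2\zeta_{a,b;\varepsilon}^p\mu(Yz)\nb z$, then expanding via the identity
\begin{align*}
g^{\alpha\beta}\nb_\alpha z\,\nb_\beta(Yz)=\tfrac12 Y\bigl(|\nb z|_g^2\bigr)+g^{\alpha\beta}(\nb_\beta Y^\gamma)\nb_\alpha z\,\nb_\gamma z
\end{align*}
and integrating the $Y(|\nb z|_g^2)$-piece by parts once more produces an identity whose left side is $\tfrac12\int_{\p\mf{U}}\chi^2\zeta_{a,b;\varepsilon}^p\mu(\mc{N}z)^2$ and whose right side is schematically of the form
\begin{align*}
\int_{W_p^\varepsilon}\!\!\bigl[\chi^2\zeta_{a,b;\varepsilon}^p\mu(Yz)\square z+|\nb(\chi^2\zeta_{a,b;\varepsilon}^p\mu)||Yz||\nb z|_e+\chi^2\zeta_{a,b;\varepsilon}^p\mu|\nb Y||\nb z|_e^2\bigr]\,dV,
\end{align*}
where $|\nb z|_e^2=|\nb_t z|^2+|\nb_x z|^2$. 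The pointwise bound $|\nb\zeta_{a,b;\varepsilon}^p|\lesssim af_p^{-1/2}\zeta_{a,b;\varepsilon}^p$ (read off directly from \eqref{eq.carleman_weight}), combined with Cauchy-Schwarz and the new hypothesis $a\gg R$, absorbs the $\square z$ piece into the existing $\tfrac1a\int\zeta_{a,b;\varepsilon}^p f_p|\square z|^2$ term, and reduces the rest to weighted $L^2$-integrals of $|\nb z|_e^2$ on $W_p^\varepsilon$.

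The final step is to trade $|\nb_x z|^2$ for $|\nb_t z|^2$ and $z^2$ using the ultrahyperbolic identity $\Delta_x z=\square z+\Delta_t z$. For a nonnegative weight $w$ supported in $W_p^\varepsilon$ (and hence away from the temporal boundary of $\mf{U}$, by the largeness of $T$ in \cref{rem_Tlarge}), integrating $\int w|\nb_x z|^2$ by parts in $x$ (the spatial boundary term vanishes since $z=0$ on $\p\mf{U}$) and then in $t$ yields
\begin{align*}
\int w|\nb_x z|^2=\int w|\nb_t z|^2-\int wz\,\square z+\int z\bigl(\nb_t w\cdot\nb_t z-\nb_x w\cdot\nb_x z\bigr).
\end{align*}
The $\square z$ term is again absorbed into $\tfrac1a\int\zeta_{a,b;\varepsilon}^p f_p|\square z|^2$, and the cross-terms are controlled by Cauchy-Schwarz against a $z^2$-weight of order $|\nb w|^2/w\sim a^2 f_p^{-1}w$, producing the stated $|z|^2$-contribution. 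The principal obstacle I anticipate is the careful bookkeeping of powers of $a$, $R$, and $f_p$ through these three successive integrations by parts and through the interior extensions of $\mc{N}$ and $\mu$: the interplay of $|\nb\zeta_{a,b;\varepsilon}^p|\lesssim af_p^{-1/2}\zeta_{a,b;\varepsilon}^p$, $\mu\lesssim R$, and the spatial thickness $R$ of $W_p^\varepsilon$ must combine to match the stated prefactors $aR^2$ on $\int\zeta_{a,b;\varepsilon}^p f_p^{-1}|\nb_t z|^2$ and $a^4R^4$ on $\int\zeta_{a,b;\varepsilon}^p f_p^{-3}|z|^2$ while leaving no residual $|\nb_x z|^2$ contribution.
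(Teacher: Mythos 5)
Your overall architecture coincides with the paper's: restrict the boundary term to $\Gamma_p^\varepsilon$ and bound its coefficient by $R$; run a multiplier identity with an interior extension of $\mc{N}$ and a cutoff supported near $\Gamma_p^\varepsilon$ to convert $\int_{\Gamma_p^\varepsilon}\zeta_{a,b;\varepsilon}^p|\mc{N}z|^2$ into volume integrals; then trade $|\nb_x z|^2$ for $|\nb_t z|^2$ and $z^2$ by integrating a weight times $z\,\square z$ by parts (your identity for $\int w|\nb_x z|^2$ is exactly the paper's \eqref{eq_g110} with $w=\rho_1^2\zeta_{a,b;\varepsilon}^p f_p^{-1}$), absorbing the residual $|\nb_x z|^2$ cross term via $a\gg R$. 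However, there are two concrete problems. First, you place the sign-changing factor $\mu=(1-\varepsilon r_p)\mc{N}f_p+\varepsilon f_p\mc{N}r_p$ inside the multiplier, so your identity produces the boundary quantity $\tfrac12\int_{\p\U}\chi^2\zeta_{a,b;\varepsilon}^p\mu(\mc{N}z)^2$. Since $\chi\equiv1$ on a neighbourhood of $\Gamma_p^\varepsilon$, its support on $\p\U\cap\D_p$ necessarily meets the region where $\mu\leqslant0$, and there the integrand is nonpositive; hence
\begin{equation*}
\int_{\Gamma_p^\varepsilon}\zeta_{a,b;\varepsilon}^p\mu(\mc{N}z)^2=\int_{\p\U\cap\D_p}\chi^2\zeta_{a,b;\varepsilon}^p\mu(\mc{N}z)^2+\int_{(\p\U\cap\D_p)\setminus\Gamma_p^\varepsilon}\chi^2\zeta_{a,b;\varepsilon}^p|\mu|(\mc{N}z)^2,
\end{equation*}
and the last integral involves $|\mc{N}z|^2$ on a boundary region you do not control. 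Your identity therefore bounds the quantity you want from \emph{below}, not above. The paper avoids this by first estimating $\mu\lesssim R$ on $\Gamma_p^\varepsilon$ and then running the multiplier with the constant weight $R$ and a nonnegative cutoff $\rho$, so the generated boundary integrand is nonnegative and dominates $R\int_{\Gamma_p^\varepsilon}\zeta_{a,b;\varepsilon}^p|\mc{N}z|^2$. You should do the same: discard $\mu$ before the multiplier step.

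Second, the pointwise bound $|\nb\zeta_{a,b;\varepsilon}^p|\lesssim a f_p^{-1/2}\zeta_{a,b;\varepsilon}^p$ that you "read off directly" is false. One has $\nb\log\zeta_{a,b;\varepsilon}^p\approx 2a\,\nb f_p/f_p+O(abf_p^{-1/2}|\nb f_p|)$, and $|\nb f_p|\lesssim R$ does \emph{not} vanish as $f_p\to0$ (on the null cone away from the vertex $\nb f_p\neq0$), so the correct bound is $|\nb\zeta_{a,b;\varepsilon}^p|\lesssim aR\,\zeta_{a,b;\varepsilon}^p f_p^{-1}$, which near $\p\D_p$ is much larger than your claim. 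This is not merely bookkeeping: the prefactors $aR^2f_p^{-1}$ and $a^4R^4f_p^{-3}$ in \eqref{eq.carleman_est} are generated precisely by the $aRf_p^{-1}$ bound, so with your (unprovable) estimate you would be deriving a stronger inequality than the theorem asserts. Replacing your bound by the correct one and redoing the Cauchy--Schwarz steps recovers exactly the stated right-hand side, so this error is repairable, but as written the derivative estimate is a gap.
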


\begin{proof}
Since the assumptions of Theorem \ref{thm_carl_int} hold, this implies that the assumptions of Theorem \ref{thm.carl_bdry} and \eqref{eq.carleman_est1} hold. Then to complete the proof we only need to show that the boundary integral term in \eqref{eq.carleman_est1} can be bounded from above by suitable terms restricted to $W_p^\varepsilon$. 
First, using \eqref{eq_N_static}, we see that
\begin{align*}
\mc{N} f_p = \frac{1}{2} \nu^x \cdot x_p, \qquad \mc{N} r_p = \frac{1}{2 r_p} \mc{N} (r_p^2) = \frac{1}{2 r_p} \mc{N} (|x_p|^2) = \frac{1}{r_p} \nu^x \cdot x_p,
\end{align*}
which implies that
\begin{align*}
( 1 - \varepsilon r_p ) & \mc{N} f_p + \varepsilon f_p \mc{N} r_p = ( 1 - \varepsilon r_p ) \frac{1}{2} \nu^x \cdot x_p + \varepsilon f_p \frac{1}{r_p} \nu^x \cdot x_p = \left( 1 - \frac{\varepsilon r_p }{2} + \frac{\varepsilon f_p}{r_p} \right) \nu^x \cdot x_p\\
& \leqslant \left( 1 - \frac{\varepsilon r_p }{2} + \frac{\varepsilon r_p^2}{r_p} \right) \nu^x \cdot x_p \leqslant \left( 1 + \frac{\varepsilon r_p }{2} \right) \nu^x \cdot x_p \lesssim R,
\end{align*}
where we have used \eqref{eq_uvf_est} and that $\varepsilon \ll R^{-1}$.
Then using \eqref{eq_def_Gamma_MB} and the above shows that
\begin{align*}
C' \int_{\partial\mf{U}\cap \mf{D}_p} \zeta_{a,b;\varepsilon}^p [( 1 - \varepsilon r_p ) \mc{N} f_p + \varepsilon f_p \mc{N} r_p ] |\mathcal{N} z|^2 & \leqslant C' \int_{\Gamma_p^\varepsilon} \zeta_{a,b;\varepsilon}^p [( 1 - \varepsilon r_p ) \mc{N} f_p + \varepsilon f_p \mc{N} r_p] |\mathcal{N} z|^2 \\
& \leqslant C' R \int_{\Gamma_p^\varepsilon} \zeta_{a,b;\varepsilon}^p |\mathcal{N} z|^2.
\end{align*}
Thus our goal is to estimate the term on the right hand side by an interior term, which will be done by defining a suitable cut-off function.
Next, we take a vector field \( h \in C^1(\bar{\U};\Rmn) \) such that \(h = \mc{N} \) on \(\partial \U \cap \D_p\) and define the cut-off function \( \rho\in C^2(\bar{\U};[0,1]) \) as follows
\begin{equation} \label{eq:3.1_MB}
\rho(t,x)
= \begin{cases}
1 ,\hspace{1cm} (t,x) \in \mathcal{O}_{\sigma/3}(\Gamma_p^\varepsilon) \cap \U ,\\
0 , \hspace{1cm} (t,x) \in \U \setminus \mathcal{O}_{\sigma/2}(\Gamma_p^\varepsilon) .
\end{cases}
\end{equation}
Now multiplying $\square z$ with the multiplier $\rho \zeta_{a,b;\varepsilon}^p h z$, integrating on $\U\cap \D_p$, and using integration by parts, gives us
\begin{align} \label{eq_g10_MB}
\int_{\U\cap \D_p} \square z \rho \zeta_{a,b;\varepsilon}^p h z = \int_{\U\cap \D_p} \nb^\alpha \nb_\alpha z \rho \zeta_{a,b;\varepsilon}^p h z = - \int_{\U\cap \D_p}\nabla_\alpha z \nabla^\alpha (\rho \zeta_{a,b;\varepsilon}^p h z) + \int_{\partial\U\cap \D_p}\mathcal{N} z \rho \zeta_{a,b;\varepsilon}^p h z ,
\end{align}
where we get no contribution from the boundary \( \U \cap \partial\D_p \), because $f_p|_{\partial \D_p} =0$ which implies that \( \zeta_{a,b;\varepsilon}^p|_{\partial \D_p} = 0 \). 
For the first term on the right hand side, applying integration by parts and using $z|_{\partial \U \cap \D_p} =0$, shows
\begin{align*}
& \int_{\U\cap \D_p} \nabla_\alpha z \nabla^\alpha ( \rho \zeta_{a,b;\varepsilon}^p h z) \\
& =  \int_{\U \cap \D_p} \rho \zeta_{a,b;\varepsilon}^p \nabla_\alpha z \nabla^\alpha(h^\beta \nabla_\beta z) + \int_{\U \cap \D_p} \nabla_\alpha z \nabla^\alpha(\rho \zeta_{a,b;\varepsilon}^p) \cdot h z \\
& = \int_{\U \cap \D_p} \rho \zeta_{a,b;\varepsilon}^p \nabla_\alpha z \nabla^\alpha h^\beta \nabla_\beta z + \int_{\U \cap \D_p} \rho \zeta_{a,b;\varepsilon}^p \nabla_\alpha z \cdot h^\beta {\nabla^\alpha}_\beta z + \int_{\U \cap \D_p} \nabla_\alpha z \nabla^\alpha(\rho \zeta_{a,b;\varepsilon}^p) \cdot h z \\
& = \int_{\U \cap \D_p} \rho \zeta_{a,b;\varepsilon}^p \nabla^\alpha h^\beta \nabla_\alpha z \nabla_\beta z + \frac{1}{2} \int_{\U \cap \D_p} \rho \zeta_{a,b;\varepsilon}^p h^\beta \nabla_\beta (\nabla_\alpha z \nabla^\alpha z) + \int_{\U \cap \D_p} \nabla_\alpha z \nabla^\alpha(\rho \zeta_{a,b;\varepsilon}^p) \cdot h z \\
& =  \int_{\U \cap \D_p} \rho \zeta_{a,b;\varepsilon}^p \nabla^\alpha h^\beta \nabla_\alpha z \nabla_\beta z - \frac{1}{2} \int_{\U \cap \D_p} \nabla_\beta (\rho \zeta_{a,b;\varepsilon}^p h^\beta ) \nabla_\alpha z \nabla^\alpha z + \frac{1}{2} \int_{\partial \U \cap \D_p} \rho \zeta_{a,b;\varepsilon}^p |\mathcal{N} z|^2 \\
& \qquad  + \int_{\U \cap \D_p} \nabla_\alpha z \nabla^\alpha(\rho \zeta_{a,b;\varepsilon}^p) \cdot h z .
\end{align*}
Substituting the above expression in \eqref{eq_g10_MB} and multiplying with the coefficient $R$, we get 
\begin{align*}
\frac{R}{2} \int_{\partial\U\cap \D_p}\rho\zeta_{a,b;\varepsilon}^p |\mathcal{N} z|^2 & = R \int_{\U\cap \D_p}\square z \rho\zeta_{a,b;\varepsilon}^p h z + R \int_{\U \cap \D_p} \rho \zeta_{a,b;\varepsilon}^p \nabla^\alpha h^\beta \nabla_\alpha z \nabla_\beta z \numberthis \label{eq_g15_MB}\\
& \qquad - \frac{R}{2} \int_{\U \cap \D_p} \nabla_\beta (\rho \zeta_{a,b;\varepsilon}^p h^\beta ) \nabla_\alpha z \nabla^\alpha z + R \int_{\U \cap \D_p} \nabla_\alpha z \nabla^\alpha(\rho \zeta_{a,b;\varepsilon}^p) hz . 
\end{align*}
For the term on the left hand side since $\Gamma_p^\varepsilon \subset \p\U \cap \D_p$, we can reduce the corresponding integral region. Indeed, using this fact along with \eqref{eq:3.1_MB}, we get
\begin{equation} \label{eq_g151_MB}
R \int_{\partial\U \cap \D_p} \rho \zeta_{a,b;\varepsilon}^p |\mathcal{N} z|^2 \geqslant R \int_{\Gamma_p^\varepsilon} \rho \zeta_{a,b;\varepsilon}^p |\mathcal{N} z|^2 \geqslant R \int_{\Gamma_p^\varepsilon} \zeta_{a,b;\varepsilon}^p |\mathcal{N} z|^2.
\end{equation}
Next, the first term on the right hand side of \eqref{eq_g15_MB} can be estimated as follows
\begin{equation} \label{eq_g155_MB}
R \int_{\U\cap \D_p}\square z \rho\zeta_{a,b;\varepsilon}^p h z \lesssim \frac{1}{a} \int_{\U \cap \D_p} \rho \zeta_{a,b;\varepsilon}^p f_p |\square z|^2 + a R^2 \int_{\U \cap \D_p} \rho \zeta_{a,b;\varepsilon}^p f_p^{-1}| h z|^2.
\end{equation}
For estimating the terms containing derivatives of $\zeta_{a,b;\varepsilon}^p$, we first prove the following claim.

\noindent \emph{Claim}: The following is satisfied
\begin{equation}\label{prf.der_zeta}
|\nabla^\alpha\zeta_{a,b;\varepsilon}^p| \lesssim a R \zeta_{a,b;\varepsilon}^p f_p^{-1}.
\end{equation}
\emph{Proof of claim}:
Using the definitions of \(u,v,f\), we get 
\[ (1 + \varepsilon u_p) (1-\varepsilon v_p) = 1 + \varepsilon (u_p-v_p) - \varepsilon u_p v_p = 1- \varepsilon r_p + \varepsilon^2 f_p . \]
Then, using the definition of $\zeta_{a,b;\varepsilon}^p$ from \eqref{eq.carleman_weight} shows
\begin{align*} 
\nabla^\alpha \zeta_{a,b;\varepsilon}^p & = \nabla^\alpha\Bigg\{\frac{f_p}{(1- \varepsilon r_p + \varepsilon^2 f_p )}\cdot\text{exp} \Bigg[\frac{2bf_p^{\frac{1}{2}}}{(1- \varepsilon r_p + \varepsilon^2 f_p )^\frac{1}{2}}\Bigg] \Bigg\}^{2a}\\
 & = 2a \Bigg\{\frac{f_p}{(1- \varepsilon r_p + \varepsilon^2 f_p )}\cdot\text{exp} \bigg[\frac{2bf_p^{\frac{1}{2}}}{(1- \varepsilon r_p + \varepsilon^2 f_p )^\frac{1}{2}}\bigg] \Bigg\}^{2a-1} \\
& \qquad \cdot \nb^\alpha \Bigg(\frac{f_p}{(1- \varepsilon r_p + \varepsilon^2 f_p )}\cdot\text{exp} \bigg[\frac{2bf_p^{\frac{1}{2}}}{(1- \varepsilon r_p + \varepsilon^2 f_p )^\frac{1}{2}}\bigg] \Bigg) \\
& = 2 a \zeta_{a,b;\varepsilon}^p \Bigg\{\frac{f_p}{(1- \varepsilon r_p + \varepsilon^2 f_p )}\cdot\text{exp} \bigg[\frac{2bf_p^{\frac{1}{2}}}{(1- \varepsilon r_p + \varepsilon^2 f_p )^\frac{1}{2}}\bigg] \Bigg\}^{-1} \Bigg\{ \nb^\alpha \bigg( \frac{f_p}{(1- \varepsilon r_p + \varepsilon^2 f_p )} \bigg) \\
& \qquad \cdot \text{exp} \bigg[\frac{2bf_p^{\frac{1}{2}}}{(1- \varepsilon r_p + \varepsilon^2 f_p )^\frac{1}{2}}\bigg] + \frac{f_p}{(1- \varepsilon r_p + \varepsilon^2 f_p )} \cdot \nb^\alpha \text{exp} \bigg[\frac{2bf_p^{\frac{1}{2}}}{(1- \varepsilon r_p + \varepsilon^2 f_p )^\frac{1}{2}}\bigg] \Bigg\} \\
& = 2a \zeta_{a,b;\varepsilon}^p \Bigg\{ \frac{(1- \varepsilon r_p + \varepsilon^2 f_p )}{f_p} \nabla^\alpha\bigg( \frac{f_p}{(1- \varepsilon r_p + \varepsilon^2 f_p )} \bigg) + \nabla^\alpha \Bigg[\frac{2bf_p^{\frac{1}{2}}}{(1- \varepsilon r_p + \varepsilon^2 f_p )^\frac{1}{2}}\Bigg]\Bigg\}. \nt \label{eq.zeta_der_0}  
\end{align*}
Next, consider the following computations
\begin{align*}
\nabla^\alpha\bigg( \frac{f_p}{(1- \varepsilon r_p + \varepsilon^2 f_p )} \bigg) & = \frac{\nabla^\alpha f_p}{(1- \varepsilon r_p + \varepsilon^2 f_p )} - \frac{f_p \nabla^\alpha (1- \varepsilon r_p + \varepsilon^2 f_p )}{(1- \varepsilon r_p + \varepsilon^2 f_p )^2} \\
& = \frac{\nabla^\alpha f_p}{(1- \varepsilon r_p + \varepsilon^2 f_p )} - \frac{f_p (-\varepsilon \nabla^\alpha r_p + \varepsilon^2 \nabla^\alpha f_p)}{(1- \varepsilon r_p + \varepsilon^2 f_p )^2}
\end{align*}
Next, by \eqref{eq_g_metric} we have that \( | \nabla^\alpha r | \leqslant 1 \), and by we get \eqref{eq_uvf_est} \( |\nabla^\alpha f_p| \lesssim R \). The second term on the right hand side of \eqref{eq.zeta_der_0} is estimated in a similar manner. Finally, using \eqref{eq.carleman_choices} gives
\begin{align*}
|\nabla^\alpha \zeta_{a,b;\varepsilon}^p| \lesssim 2 a \zeta_{a,b;\varepsilon} \left( \frac{(1- \varepsilon r_p + \varepsilon^2 f_p ) R}{f_p}  + \frac{ b R }{ f_p^\frac{1}{2} } \right) \lesssim a R & \zeta_{a,b;\varepsilon} ( f_p^{-1} + b f_p^{-\frac{1}{2}}) \\
& \lesssim a R \zeta_{a,b;\varepsilon} f_p^{-1} ( 1 + b f_p^{\frac{1}{2}}) \lesssim a R  \zeta_{a,b;\varepsilon}^p f_p^{-1},
\end{align*}
which completes the proof of the claim.

We will now use the above claim to estimate the third and fourth terms on the right hand side of \eqref{eq_g15_MB}. First, we have
\begin{align*} 
|\nabla_\beta (\rho \zeta_{a,b;\varepsilon}^p h^\beta )| & \leqslant |\nb_\beta \rho \cdot \zeta_{a,b;\varepsilon}^p h^\beta + \rho \nb_\beta \zeta_{a,b;\varepsilon}^p h^\beta + \rho \zeta_{a,b;\varepsilon}^p \nb_\beta h^\beta| \\
& \lesssim |\nb_\beta \rho h^\beta \zeta_{a,b;\varepsilon}^p| + a R \zeta_{a,b;\varepsilon}^p f_p^{-1} \rho + \rho \zeta_{a,b;\varepsilon}^p |\nb_\beta h^\beta| \\
& \lesssim |\nb_\beta \rho h^\beta \zeta_{a,b;\varepsilon}^p| \cdot f_p f_p^{-1} + a R \zeta_{a,b;\varepsilon}^p f_p^{-1} \rho + \rho \zeta_{a,b;\varepsilon}^p |\nb_\beta h^\beta| \cdot f_p f_p^{-1} \\
& \lesssim a R (|h^\beta \nb_\beta \rho| + \rho) \zeta_{a,b;\varepsilon}^p f_p^{-1}, \nt \label{eq_g15666}
\end{align*}
where in the last step we have used the fact that $f_p < r^2 < R^2 < a R$. A similar computation also shows that $|\nabla^\alpha (\rho \zeta_{a,b;\varepsilon}^p)| < a R (|\nb^\alpha \rho| + \rho) \zeta_{a,b;\varepsilon}^p f_p^{-1}$. Substituting this fact along with \eqref{eq_g151_MB} and \eqref{eq_g155_MB}, in \eqref{eq_g15_MB} shows that
\begin{align*}
R & \int_{\Gamma_p^\varepsilon} \zeta_{a,b;\varepsilon}^p |\mathcal{N} z|^2 \nt \label{eq_g157} \\
& \lesssim \frac{1}{a} \int_{\U\cap \D_p} \rho \zeta_{a,b;\varepsilon}^p f_p |\square z|^2 + a R^2 \int_{\U \cap \D_p} \rho \zeta_{a,b;\varepsilon}^p f_p^{-1}| h z|^2 + R \int_{\U \cap \D_p} \rho \zeta_{a,b;\varepsilon}^p \nabla^\alpha h^\beta \nabla_\alpha z \nabla_\beta z\\
& \qquad  + a R^2 \int_{\U \cap \D_p} (|h^\beta \nb_\beta \rho| + \rho) \zeta_{a,b;\varepsilon}^p f^{-1} \nabla_\alpha z \nabla^\alpha z + aR^2 \int_{\U \cap \D_p} (|\nb^\alpha \rho| + \rho) \zeta_{a,b;\varepsilon}^p f^{-1} | \nb_\alpha z hz|. 
\end{align*}
Using the properties of the cut-off function $\rho$ from \eqref{eq:3.1_MB}, we have
\begin{equation} \label{eq:g2_MB} 
R \int_{ \Gamma_p^\varepsilon } \zeta_{a,b;\varepsilon}^p |\mathcal{N} z|^2 \lesssim \frac{1}{a} \int_{\U \cap \D_p} \zeta_{a,b;\varepsilon}^p f_p |\square z|^2 + a R^2 \int_{\mathcal{O}_{\sigma/2}(\Gamma_p^\varepsilon) \cap (\U \cap \D_p)} \zeta_{a,b;\varepsilon}^p f_p^{-1} ( |\nabla_{t,x} z|^2).
\end{equation}
Our next goal is to bound the $|\nb_x z|$ term on the right hand side from above by data restricted to $W_p^\varepsilon$. 
Define the cut-off function \( \rho_1\in C^2 ( \Bar{\U};[0,1] ) \) as
\begin{equation} \label{eq:rho_1_MB}
\rho_1(t,x) = \begin{cases}
1 , \qquad (t,x) \in \mathcal{O}_{\sigma/2}(\Gamma_p^\varepsilon) \cap \U \cap \D_p , \\
0 , \qquad (t,x) \in (\U \cap \D_p) \setminus W_p^\varepsilon.
\end{cases} 
\end{equation}
Furthermore, also define the function $\eta(t,x):=\rho_1^2 \zeta_{a,b;\varepsilon}^p f_p^{-1}$. Next, we use integration by parts to get the following
\begin{align*}
\int_{\U \cap \D} \eta z \square z = - \int_{\U \cap \D} z \nb^\alpha \eta \nb_\alpha z - \int_{\U \cap \D} \eta \nb^\alpha z \nb_\alpha z,
\end{align*}
where we do not see any boundary terms because $z$ vanishes on $\p\U \cap \D$ and $\zeta_{a,b;\varepsilon}^p$ vanishes on $\U \cap \p\D_p$. The above equation implies that
\begin{equation} \label{eq_g110}
\int_{\U \cap \D_p}\eta |\nabla_x z|^2 = - \int_{\U \cap \D_p}\eta z \square z + \int_{\U\cap \D_p} z \nb_t z \cdot \nb_t \eta - \int_{\U\cap \D_p} z \nabla_x z \cdot \nabla_x \eta + \int_{\U\cap \D_p} \eta |\partial_t z|^2.
\end{equation}
Before proceeding further, we give estimates on the derivatives of $\eta$, which can be calculated using similar ideas as \eqref{eq_g15666}:
\[|\nabla_x \eta| \lesssim \rho_1 \zeta_{a,b;\varepsilon}^p \left( f_p^{-1} |\nabla_x \rho_1| + a R \rho_1 f_p^{-2} \right), \qquad|\nb_t \eta| \lesssim \rho_1 \zeta_{a,b;\varepsilon}^p \left( f_p^{-1} |\nabla_t \rho_1| + a R \rho_1 f_p^{-2} \right).\]
Substituting the above in \eqref{eq_g110}, we get
\begin{align*}
\int_{\U\cap \D_p} & \rho_1^2 \zeta_{a,b;\varepsilon}^p f_p^{-1} | \nabla_x z|^2 \\
& \lesssim \int_{\U\cap \D_p} \rho_1^2\zeta_{a,b;\varepsilon}^p f_p^{-1} | z\square z| + \int_{\U\cap \D_p}\rho_1 \zeta_{a,b;\varepsilon}^p \left[ f_p^{-1} |\nabla_x \rho_1| + a R \rho_1 f_p^{-2} \right] |z \nb_t z | \\
& \qquad  + \int_{\U\cap \D_p} \rho_1 \zeta_{a,b;\varepsilon}^p \left[ f_p^{-1} |\nabla_x \rho_1| + a R \rho_1 f_p^{-2} \right] | z \nabla_x z | + \int_{\U\cap\D_p}\rho_1^2\zeta_{a,b;\varepsilon}^p f_p^{-1}|\nb_t z|^2.
\end{align*}
Since there is a constant $aR^2$ with the $\nb_x z$ term in \eqref{eq:g2_MB} that we want to estimate, we multiply the above estimate with $aR^2$ throughout to get
\begin{align*}
aR^2 & \int_{\U\cap \D_p} \rho_1^2 \zeta_{a,b;\varepsilon}^p f_p^{-1} | \nabla_x z|^2  \numberthis \label{eq_g112_MB} \\
& \lesssim a R^2 \int_{\U\cap \D_p} \rho_1^2\zeta_{a,b;\varepsilon}^p f_p^{-1} | z \square z| + aR^2 \int_{\U\cap \D_p}\rho_1 \zeta_{a,b;\varepsilon}^p \left[ f_p^{-1} |\nabla_x \rho_1| + a R \rho_1 f_p^{-2} \right] |z \nb_t z | \\
& \qquad + aR^2 \int_{\U\cap \D_p} \rho_1 \zeta_{a,b;\varepsilon}^p \left[ f_p^{-1} |\nabla_x \rho_1| + a R \rho_1 f_p^{-2} \right] | z \nabla_x z | + aR^2 \int_{\U\cap\D_p}\rho_1^2\zeta_{a,b;\varepsilon}^p f_p^{-1}|\nb_t z|^2.
\end{align*}
For the first term on the right hand side of the above estimate, we use Cauchy-Schwarz inequality to get
\begin{equation} \label{eq_g1121}
a R^2 \int_{\U\cap \D_p} \rho_1^2\zeta_{a,b;\varepsilon}^p f_p^{-1} | z \square z| \lesssim \frac{1}{a} \int_{\U\cap \D_p} \rho_1^2 \zeta_{a,b;\varepsilon}^p f_p |\square z|^2 + a^3 R^4 \int_{\U\cap \D_p} \rho_1^2 \zeta_{a,b;\varepsilon}^p f_p^{-3} z^2.
\end{equation}
Now we will estimate the second term on the right hand side of \eqref{eq_g112_MB}:
\begin{align*}
& a R^2 \int_{\U\cap \D_p} \rho_1 \zeta_{a,b;\varepsilon}^p \left[ f_p^{-1} |\nabla_t \rho_1| + a R \rho_1 f_p^{-2} \right] | z \nabla_t z | \\
& \lesssim a R^2 \int_{\U\cap \D_p} \rho_1 \zeta_{a,b;\varepsilon}^p f_p^{-1} |\nabla_t \rho_1| | z \nabla_t z | + a^2 R^3 \int_{\U\cap \D_p} \rho_1^2 \zeta_{a,b;\varepsilon}^p f_p^{-2} | z \nabla_t z | \\
& \lesssim R^2 \int_{\U\cap \D_p} \zeta_{a,b;\varepsilon}^p f_p^{-1} (a|\nabla_t \rho_1 z |) \cdot (\rho_1 |\nabla_t z|) + \int_{\U\cap \D_p} \rho_1^2 \zeta_{a,b;\varepsilon}^p f_p^{-1} (R |\nb_t z|) \cdot (a^2 R^2 f_p^{-1}|z|)  \\
& \lesssim a^2 R^2 \int_{\U\cap \D_p} \zeta_{a,b;\varepsilon}^p f_p^{-1} |\nabla_t \rho_1|^2 z^2 + R^2 \int_{\U\cap \D_p} \zeta_{a,b;\varepsilon}^p f_p^{-1} \rho_1^2 |\nabla_t z|^2 + a^4 R^4 \int_{\U\cap \D_p} \rho_1^2 \zeta_{a,b;\varepsilon}^p f_p^{-3} z^2, \nt \label{eq_g1122}
\end{align*}
where we used the Cauchy-Schwarz inequality to conclude the last step. A similar computation shows that the third term on the right hand side of \eqref{eq_g112_MB} satisfies
\begin{align*}
& a R^2 \int_{\U\cap \D_p} \rho_1 \zeta_{a,b;\varepsilon}^p \left[ f_p^{-1} |\nabla_x \rho_1| + a R \rho_1 f_p^{-2} \right] | z \nabla_x z | \\
& \lesssim a^2 R^2 \int_{\U\cap \D_p} \zeta_{a,b;\varepsilon}^p f_p^{-1} |\nabla_x \rho_1|^2 z^2 + R^2 \int_{\U\cap \D_p} \zeta_{a,b;\varepsilon}^p f_p^{-1} \rho_1^2 |\nabla_x z|^2 + a^4 R^4 \int_{\U\cap \D_p} \rho_1^2 \zeta_{a,b;\varepsilon}^p f_p^{-3} z^2.
\end{align*}
Using the above along with \eqref{eq_g1121} and \eqref{eq_g1122}, in \eqref{eq_g112_MB}, we get
\begin{align*}
& aR^2 \int_{\U\cap \D_p} \rho_1^2 \zeta_{a,b;\varepsilon}^p f_p^{-1} | \nabla_x z|^2 \\
& \lesssim \frac{1}{a} \int_{\U\cap \D_p} \rho_1^2 \zeta_{a,b;\varepsilon}^p f_p |\square z|^2 + a^3 R^4 \int_{\U\cap \D_p} \rho_1^2 \zeta_{a,b;\varepsilon}^p f_p^{-3} z^2 + a^2 R^2 \int_{\U\cap \D_p} \zeta_{a,b;\varepsilon}^p f_p^{-1} |\nabla_t \rho_1|^2 z^2 \\
& \qquad + R^2 \int_{\U\cap \D_p} \zeta_{a,b;\varepsilon}^p f_p^{-1} \rho_1^2 |\nabla_t z|^2 + a^4 R^4 \int_{\U\cap \D_p} \rho_1^2 \zeta_{a,b;\varepsilon}^p f_p^{-3} z^2 + a^2 R^2 \int_{\U\cap \D_p} \zeta_{a,b;\varepsilon}^p f_p^{-1} |\nabla_x \rho_1|^2 z^2 \\
& \qquad + R^2 \int_{\U\cap \D_p} \zeta_{a,b;\varepsilon}^p f_p^{-1} \rho_1^2 |\nabla_x z|^2 + a^4 R^4 \int_{\U\cap \D_p} \rho_1^2 \zeta_{a,b;\varepsilon}^p f_p^{-3} z^2. \nt \label{eq_g19}
\end{align*}
Now, the seventh term on the right hand side containing $|\nb_x z|^2$ can be absorbed into the left hand side because $a \gg R$. For the terms containing $z^2$, we have
\begin{align*}
a^3 R^4 \rho_1^2 f_p^{-3} & + a^2 R^2 |\nabla_t \rho_1|^2 f_p^{-1} + 2 a^4 R^4 \rho_1^2 f_p^{-3} + a^2 R^2 |\nabla_x \rho_1|^2 f_p^{-1} \\
& \leqslant 3 a^4 R^4 \rho_1^2 f_p^{-3} + a^2 R^2 |\nabla_t \rho_1|^2 f_p^{-3} f_p^2 + a^2 R^2 |\nabla_x \rho_1|^2 f_p^{-3} f_p^2 \\
& \lesssim a^4 R^4 (\rho_1^2 + |\nabla_t \rho_1|^2 + |\nabla_x \rho_1|^2) f_p^{-3},
\end{align*}
where we used the fact that $f_p^2 < r^4 < R^4 < a^2 R^2$. Hence, \eqref{eq_g19} reduces to
\begin{align*}
aR^2 \int_{\U\cap \D_p} \rho_1^2 \zeta_{a,b;\varepsilon}^p f_p^{-1} | \nabla_x z|^2 & \lesssim \frac{1}{a} \int_{\U\cap \D_p} \rho_1^2 \zeta_{a,b;\varepsilon}^p f_p |\square z|^2 + R^2 \int_{\U\cap \D_p} \zeta_{a,b;\varepsilon}^p f_p^{-1} \rho_1^2 |\nabla_t z|^2\\
& \qquad + a^4 R^4 \int_{\U\cap \D_p} (\rho_1^2 + |\nabla_t \rho_1|^2 + |\nabla_x \rho_1|^2) \zeta_{a,b;\varepsilon}^p f_p^{-3} z^2.
\end{align*}
Now we will restrict the integral region on the left hand side to a subset. In particular, we consider the region where $\rho_1(t,x) = 1$, to get
\begin{align*}
aR^2 \int_{\mathcal{O}_{\sigma/2}(\Gamma_p^\varepsilon) \cap \U \cap \D_p } \zeta_{a,b;\varepsilon}^p f_p^{-1} | \nabla_x z|^2 & \lesssim \frac{1}{a} \int_{\U\cap \D_p} \rho_1^2 \zeta_{a,b;\varepsilon}^p f_p |\square z|^2 + R^2 \int_{W_p^\varepsilon} \zeta_{a,b;\varepsilon}^p f_p^{-1} |\nabla_t z|^2\\
& \qquad + a^4 R^4 \int_{W_p^\varepsilon} \zeta_{a,b;\varepsilon}^p f_p^{-3} z^2,
\end{align*}
where we used \eqref{eq:rho_1_MB} to get the appropriate integral regions for the terms on the right hand side. We have estimated the $|\nb_x z|^2$ term present on the right hand side of \eqref{eq:g2_MB}. Hence, we get
\[R \int_{ \Gamma_p^\varepsilon } \zeta_{a,b;\varepsilon}^p |\mathcal{N} z|^2 \lesssim \frac{1}{a} \int_{\U \cap \D_p} \zeta_{a,b;\varepsilon}^p f_p |\square z|^2 + a R^2 \int_{W_p^\varepsilon} \zeta_{a,b;\varepsilon}^p f_p^{-1} |\nabla_t z|^2 + a^4 R^4 \int_{W_p^\varepsilon} \zeta_{a,b;\varepsilon}^p f_p^{-3} z^2,\]
which concludes the proof.
\end{proof}

\begin{remark}
In the above theorem, we control the weighted norms of $(z,\nb_{t,x}z)$ in \eqref{eq.carleman_est} by terms involving the bulk: $\square z$, a first order term: $\nb_t z$, and a zeroth order term: $z$. Equivalently, we can replace the $\nb_t z$ on the right hand side with a $\nb_x z$ term. For this purpose, we only need to make slight modifications in the above proof. In particular, if we switch $t$ and $x$ after equation \eqref{eq:g2_MB} and continue the proof, we arrive at the analogous result with $\nb_x z$ on the right hand side of \eqref{eq.carleman_est}, instead of $\nb_t z$.
\end{remark}

\section{Uniqueness results} \label{sec_uni}
 
We are now going to use \cref{thm.carl_bdry} and \cref{thm_carl_int} to obtain suitable uniqueness results for the ultrahyperbolic equation. We choose the following constants
\begin{equation} \label{eq_sup_VX}
R_+:= \sup_{\U\cap\D_p} r_p, \qquad M_0 := \sup_{\mf{U}} |V|, \qquad M_1 := \sup_{\mf{U}} |\mc{X}^{t,x}|.
\end{equation}

\begin{theorem}[Boundary] \label{thm_uc_bdry_Dp}
Let $T>R_+$. Let $z_1, z_2$ be two solutions of \eqref{eq_main_z_A} and let $\Gamma_p^\varepsilon$ be given by \eqref{eq_def_Gamma_MB}. Then $\mc{N}z_1 = \mc{N}z_2$ on $\Gamma_p^\varepsilon$ implies that $z_1=z_2$ on $\mf{U}\cap \mf{D}_p$. That is, the value of $(\mc{N}z,z)|_{\Gamma_p^\varepsilon}$ uniquely determines the solution in the region $\mf{U}\cap \mf{D}_p$.
\end{theorem}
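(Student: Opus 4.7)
The plan is a direct application of the boundary Carleman estimate \cref{thm.carl_bdry}. First, set $w := z_1 - z_2 \in C^2(\mathfrak{U})$. Since both $z_i$ share the boundary data $H$ and satisfy the same equation, $w$ solves the homogeneous problem
\[
\square w + \nabla_\mathcal{X} w + V w = 0 \text{ in } \mathfrak{U}, \qquad w = 0 \text{ on } \partial \mathfrak{U},
\]
together with the extra Neumann-type information $\mathcal{N} w = 0$ on $\Gamma_p^\varepsilon$. In particular $w$ vanishes on $\partial \mathfrak{U} \cap \mathfrak{D}_p$, so the hypotheses of \cref{thm.carl_bdry} are met, and I apply \eqref{eq.carleman_est1} to $w$ (choosing $T>R_+$ so that the integration region $\mathfrak{U}\cap \mathfrak{D}_p$ does not touch the temporal part of $\partial\mathfrak{U}$, as in \cref{rem_Tlarge}).

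Next, I handle the boundary integral on the right-hand side of \eqref{eq.carleman_est1}. By the very definition \eqref{eq_def_Gamma_MB} of $\Gamma_p^\varepsilon$, the weight $(1-\varepsilon r_p)\mathcal{N} f_p + \varepsilon f_p \mathcal{N} r_p$ is non-positive on $(\partial \mathfrak{U}\cap \mathfrak{D}_p)\setminus \Gamma_p^\varepsilon$, while on $\Gamma_p^\varepsilon$ itself the factor $|\mathcal{N} w|^2$ vanishes by hypothesis. Hence the entire boundary term can be discarded, leaving
\[
C\varepsilon \int_{\mathfrak{U}\cap \mathfrak{D}_p}\zeta_{a,b;\varepsilon}^p r_p^{-1}\bigl(|u_p \partial_{u_p} w|^2 + |v_p \partial_{v_p} w|^2 + f_p g^{ab}\slashed\nabla_a^p w \slashed\nabla_b^p w - f_p g^{CD}\tilde\nabla_C^p w \tilde\nabla_D^p w\bigr) + Cba^2 \int_{\mathfrak{U}\cap \mathfrak{D}_p}\zeta_{a,b;\varepsilon}^p f_p^{-1/2} w^2 \leqslant \tfrac{1}{a}\int_{\mathfrak{U}\cap \mathfrak{D}_p}\zeta_{a,b;\varepsilon}^p f_p |\square w|^2.
\]

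Using the PDE, $|\square w|^2 \leqslant 2 M_1^2 |\nabla w|^2 + 2 M_0^2 w^2$ pointwise. The zeroth-order piece is straightforward to absorb: since $f_p\leqslant R_+^2$ on $\mathfrak{U}\cap \mathfrak{D}_p$, one gets $\tfrac{1}{a}\int \zeta_{a,b;\varepsilon}^p f_p M_0^2 w^2 \lesssim \tfrac{M_0^2 R_+^3}{a}\int \zeta_{a,b;\varepsilon}^p f_p^{-1/2} w^2$, which is absorbed by the $b a^2$ term on the left once $a$ is chosen large enough (depending on $M_0, R_+, b$). The main obstacle is the first-order contribution $\tfrac{M_1^2}{a}\int \zeta_{a,b;\varepsilon}^p f_p |\nabla w|^2$: to dominate it by the null-decomposed left-hand side one needs control of $f_p|\partial_{u_p} w|^2$ and $f_p|\partial_{v_p} w|^2$ by $r_p^{-1}|u_p \partial_{u_p} w|^2$ and $r_p^{-1}|v_p \partial_{v_p} w|^2$, and the conversion factors $\tfrac{f_p r_p}{u_p^2} = \tfrac{r_p v_p}{|u_p|}$ and $\tfrac{f_p r_p}{v_p^2} = \tfrac{r_p|u_p|}{v_p}$ blow up as one approaches $\partial \mathfrak{D}_p=\{u_p v_p = 0\}$. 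This is precisely where assumption \eqref{assump_X} saves the argument: writing $\nabla_\mathcal{X} w$ in null coordinates and using that $\mathcal{X}$ vanishes on $\mathfrak{U}\cap \partial \mathfrak{D}_p$, the smoothness of $\mathcal{X}$ forces its null components to vanish at the cone, supplying compensating factors of $u_p$ or $v_p$ which cancel the singular ratios and reduce the first-order contribution to terms absorbable into the $\varepsilon$-coefficient bulk terms, at the cost of taking $a$ still larger (depending on $M_1$ and $\mathcal{X}$).

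After these absorptions, and dividing through by the surviving positive constants, one obtains
\[
\int_{\mathfrak{U}\cap \mathfrak{D}_p}\zeta_{a,b;\varepsilon}^p f_p^{-1/2} w^2 \leqslant 0.
\]
Since $\zeta_{a,b;\varepsilon}^p > 0$ and $f_p > 0$ throughout $\mathfrak{U}\cap \mathfrak{D}_p$, this forces $w\equiv 0$ on $\mathfrak{U}\cap \mathfrak{D}_p$, i.e. $z_1 = z_2$ there, as claimed.
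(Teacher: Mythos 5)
Your proposal follows the same overall strategy as the paper: form $w=z_1-z_2$, apply the boundary Carleman estimate \eqref{eq.carleman_est1}, discard the boundary integral using the sign of $(1-\varepsilon r_p)\mc{N}f_p+\varepsilon f_p\mc{N}r_p$ off $\Gamma_p^\varepsilon$ together with $\mc{N}w=0$ on $\Gamma_p^\varepsilon$, absorb the $V$-term into the $ba^2$ bulk term, and then confront the first-order term, which is exactly the delicate point. The one place where you genuinely diverge is the mechanism for absorbing $\frac{1}{a}\int\zeta^p_{a,b;\varepsilon}f_p|\nabla_{\mc{X}}w|^2$. The paper splits $\U\cap\D_p$ along a level set $\{f_p=\mu\}$: it asserts that $\mc{X}$ vanishes identically on the collar $\{f_p\leqslant\mu\}$, so only the region $\{f_p>\mu\}$ contributes, and there $-u_p,v_p\gtrsim\mu/R_+$ makes the conversion factors between $f_p|\partial w|^2$ and $r_p^{-1}|u_p\partial_{u_p}w|^2$, $r_p^{-1}|v_p\partial_{v_p}w|^2$ bounded, at the price of $a\gg\mu^{-1}\delta^{-2}M_1^2R_+^4$. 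You instead exploit the vanishing of $\mc{X}$ on $\U\cap\p\D_p$ through a Lipschitz/Taylor bound, gaining a factor of $|u_p|$ (resp.\ $v_p$) in the coefficient that cancels the singular ratio $r_pv_p/|u_p|$ (resp.\ $r_p|u_p|/v_p$) outright, with no domain splitting. Your route actually uses only what the hypothesis literally provides ($\mc{X}$ smooth and zero on the cone), whereas the paper's inference that smoothness plus vanishing on $\U\cap\p\D_p$ forces vanishing on a full neighbourhood $\{f_p\leqslant\mu\}$ is not automatic; in that sense your reading is the more robust one. To make your step airtight you should record the quantitative estimate you are invoking, namely $|\mc{X}|\lesssim\min(|u_p|,v_p)$ on $\U\cap\D_p$ (which requires relating $\min(|u_p|,v_p)$ to the distance, within $\bar{\U}$, to the set $\U\cap\p\D_p$ where $\mc{X}$ is known to vanish), verify it for the angular components as well as the null ones, and note that it yields $f_p|\mc{X}^\alpha\nabla_\alpha w|^2\lesssim R_+^3\, r_p^{-1}\bigl(|u_p\partial_{u_p}w|^2+|v_p\partial_{v_p}w|^2+f_pg^{ab}\slashed\nabla^p_aw\slashed\nabla^p_bw-f_pg^{CD}\tilde\nabla^p_Cw\tilde\nabla^p_Dw\bigr)$, so that the absorption succeeds once $a\gg\delta^{-2}M_1^2R_+^4$, matching the flavour of \eqref{eq_a_choice}.
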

\begin{proof}[Proof of Theorem \ref{thm_uc_bdry_Dp}]
If $z_1$ and $z_2$ are two solutions of \eqref{eq_main_z_A}, then the function $z$ defined by $z:=z_1-z_2$ satisfies
\begin{equation} \label{eq_uni_main}
\begin{rcases}
\square z + \nabla_\mc{X} z + Vz = 0, \qquad & \text{in } \mf{U},\\
z = 0, \qquad & \text{on } \partial \mf{U}.
\end{rcases}
\end{equation}
Then to prove the result, it is enough to show that $z=0$ on $\p\U\cap\D_p$. Fix $\delta$ such that $0<\delta\ll 1$ and let $a$ be large enough such that
\begin{equation} \label{eq_a_choice}
a \gg \max \{ (m+n)^2, R_+, \delta^{-\frac{1}{3}} M_0^\frac{2}{3} R_+^\frac{4}{3}, \mu^{-1} \delta^{-2} M_1^2 R_+^4 \},
\end{equation}
where $\mu >0$ is to be chosen later.
Then choose the constants $\varepsilon$ and $b$ as follows
\begin{equation} \label{eq_varep}
\varepsilon := \delta^2 R_+^{-1}, \quad b := := \delta R_+^{-1}.
\end{equation}
Then we can apply \ref{thm.carl_bdry} and \eqref{eq.carleman_est} with the choice of constants mentioned above and $R:=R_+$, to get
\begin{align} \label{eq_uni_pf_1}
\notag \frac{C\delta^2}{R_+^2} & \int_{\mf{U}\cap \mf{D}_p}\zeta_{a,b;\varepsilon}^p (|u_p \partial_{u_p} z|^2 + |v_p \partial_{v_p} z|^2 + f_p g^{ab}\slashed\nabla_a^p z \slashed\nabla_b^p z - f_p g^{CD} \tilde{\nabla}_C^p z \tilde{\nabla}_D^p z ) + \frac{C\delta a^2}{R_+^2}\int_{\mf{U}\cap \mf{D}_p}\zeta_{a,b;\varepsilon}^p z^2 \\
& \leqslant \frac{1}{a}\int_{\mf{U}\cap \mf{D}_p} \zeta_{a,b;\varepsilon}^p f_p |\square z|^2 +  C' \int_{\Gamma_p^\varepsilon} \zeta_{a,b;\varepsilon}^p [( 1 - \varepsilon r_p ) \mc{N} f_p + \varepsilon f_p \mc{N} r_p ] |\mathcal{N} z|^2, 
\end{align}
where we have also used the fact that $0<f<r^2<R_+^2$ to get an extra power of $R_+$ in the denominators of the terms on the left hand side.
Since $\mathcal{N} z = 0$ on $\Gamma_p^\varepsilon$, it is enough to show that the first term present on the right hand side of the above estimate, namely
\[ \frac{1}{a}\int_{\mf{U}\cap \mf{D}_p} \zeta_{a,b;\varepsilon}^p f_p |\square z|^2 \]
can be absorbed into the left hand side. Using \eqref{eq_uni_main} to write the above term, we get
\[ \frac{1}{a}\int_{\mf{U}\cap \mf{D}_p} \zeta_{a,b;\varepsilon}^p f_p |\square z|^2 \leqslant \frac{2}{a}\int_{\mf{U}\cap \mf{D}_p} \zeta_{a,b;\varepsilon}^p f_p |\nb_\mc{X} z|^2 + \frac{2}{a}\int_{\mf{U}\cap \mf{D}_p} \zeta_{a,b;\varepsilon}^p f_p |V z|^2 =: I_1 + I_0.\]
For $I_1$, we have
\begin{equation} \label{eq_uni_pf_22}
I_1 = \frac{2}{a} \int_{\mf{U}\cap \mf{D}_p} \zeta_{a,b;\varepsilon}^p f_p |\nb_\mc{X} z|^2 = \frac{2}{a} \int_{\mf{U}\cap \mf{D}_p} \zeta_{a,b;\varepsilon}^p f_p | \mc{X}^\alpha \nb_\alpha z|^2.
\end{equation}
For $I_0$, using \eqref{eq_sup_VX} and \eqref{eq_a_choice}, shows that
\begin{align*}
I_0 :=  \frac{2}{a}\int_{\mf{U}\cap \mf{D}_p} \zeta_{a,b;\varepsilon}^p f_p |V|^2 |z|^2 \leqslant \frac{M_0^2 R_+^2}{a}\int_{\mf{U}\cap \mf{D}_p} \zeta_{a,b;\varepsilon}^p |z|^2 \ll \frac{\delta a^2}{R_+^2}\int_{\mf{U}\cap \mf{D}_p}\zeta_{a,b;\varepsilon}^p z^2.
\end{align*}
Thus, $I_0$ can be absorbed into the zeroth order term present on the left hand side of \eqref{eq_uni_pf_1}, after which we are left with 
\begin{equation} \label{eq_uni_pf_3}
C \int_{\mf{U}\cap \mf{D}_p}\zeta_{a,b;\varepsilon}^p \left[ \frac{\delta^2}{R_+^2} \left(|u_p \partial_{u_p} z|^2 + |v_p \partial_{v_p} z|^2 + f_p g^{ab}\slashed\nabla_a^p z \slashed\nabla_b^p z - f_p g^{CD} \tilde{\nabla}_C^p z \tilde{\nabla}_D^p z \right) + \frac{\delta a^2}{R_+^2} z^2 \right] \leqslant I_1.
\end{equation}
From the above estimate and \eqref{eq_uni_pf_22}, we see that the weights in the integrand of the first order term present on the left hand side of the above estimate are different from the weight in the integrand in $I_1$. Thus, we cannot absorb $I_1$ into the left hand side directly. For this purpose, we need to split the domain into two parts. Note that as mentioned in \eqref{assump_X}, $\mc{X}$ vanishes on $\U \cap \p\D_p$. This, along with the fact that $\X$ is a smooth function, implies that it must also vanish near the boundary of the cone $\p\D_p$, that is, the part where $f_p$ vanishes. Hence, there exists a $\mu>0$ such that 
\[ \mc{X} (t,x) = 0 \text{ on } \mf{U} \cap \mf{D}_p \cap \left\{ f_p \leq \mu \right\}. \]
Now we split the integral region as follows
\begin{align*}
\mf{U}_\leq &:= \mf{U} \cap \mf{D}_p \cap \left\{ f_p \leq \mu \right\}, \\
\notag \mf{U}_> &:= \mf{U} \cap \mf{D}_p \cap \left\{ f_p > \mu \right\}.
\end{align*}
Then $\mc{X}$ vanishes identically on $\U_\leqslant$. Next, note that on $\U_>$, we have
\begin{align*}
v_p = \frac{ f_p }{ - u_p } \gtrsim \frac{ \mu }{ R_+ } , \qquad - u_p = \frac{ f_p }{ v_p } \gtrsim \frac{ \mu }{ R_+ } .
\end{align*}
Using the above, we estimate the left hand side of \eqref{eq_uni_pf_3} from below as follows 
\begin{align*}
& C \int_{\mf{U}\cap \mf{D}_p} \zeta_{a,b;\varepsilon}^p \left[ \frac{\delta^2}{R_+^2}  \left(|u_p \partial_{u_p} z|^2 + |v_p \partial_{v_p} z|^2 + f_p g^{ab}\slashed\nabla_a^p z \slashed\nabla_b^p z - f_p g^{CD} \tilde{\nabla}_C^p z \tilde{\nabla}_D^p z \right) + \frac{\delta a^2}{R_+^2} z^2 \right] \\
& \geqslant \frac{C \mu \delta^2 }{R_+^3} \int_{\mf{U}_>}\zeta_{a,b;\varepsilon}^p \left(- u_p |\partial_{u_p} z|^2 + v_p |\partial_{v_p} z|^2 + v_p g^{ab}\slashed\nabla_a^p z \slashed\nabla_b^p z - v_p g^{CD} \tilde{\nabla}_C^p z \tilde{\nabla}_D^p z \right) \\
& \qquad + \frac{C\delta a^2}{R_+^2} \int_{\mf{U}\cap \mf{D}_p} \zeta_{a,b;\varepsilon}^p z^2.
\end{align*}
Combining \eqref{eq_uni_pf_3} and the above we get
\begin{align*}
\frac{C \mu \delta^2 }{R_+^3} & \int_{\mf{U}_>}\zeta_{a,b;\varepsilon}^p \left(- u_p |\partial_{u_p} z|^2 + v_p |\partial_{v_p} z|^2 + v_p g^{ab}\slashed\nabla_a^p z \slashed\nabla_b^p z - v_p g^{CD} \tilde{\nabla}_C^p z \tilde{\nabla}_D^p z \right) \\
+ & \frac{C\delta a^2}{R_+^2} \int_{\mf{U}\cap \mf{D}_p}\zeta_{a,b;\varepsilon}^p z^2 \leqslant I_1. \nt \label{eq_uni_pf_31}
\end{align*}
Now, from \eqref{eq_uni_pf_22}, we have
\begin{align*}
I_1 & = \frac{2}{a} \int_{\mf{U}_>} \zeta_{a,b;\varepsilon}^p f_p | \mc{X}^\alpha \nb_\alpha z|^2  + \frac{2}{a} \int_{\mf{U}_\leqslant} \zeta_{a,b;\varepsilon}^p f_p | \mc{X}^\alpha \nb_\alpha z|^2 \\
& \leqslant \frac{2 M_1^2 R_+}{a} \int_{\mf{U}_>} \zeta_{a,b;\varepsilon}^p  \left(-u_p| \partial_{u_p} z|^2 + v_p|\partial_{v_p} z|^2 + v_p g^{ab}\slashed\nabla_a^p z \slashed\nabla_b^p z - v_p g^{CD} \tilde{\nabla}_C^p z \tilde{\nabla}_D^p z \right) \\
& \ll C \frac{\mu \delta^2 }{R_+^3}  \int_{\mf{U}_>}\zeta_{a,b;\varepsilon}^p \left(-u_p |\partial_{u_p} z|^2 + v_p |\partial_{v_p} z|^2 + v_p g^{ab}\slashed\nabla_a^p z \slashed\nabla_b^p z - v_p g^{CD} \tilde{\nabla}_C^p z \tilde{\nabla}_D^p z \right),
\end{align*}
where we used \eqref{eq_a_choice} in the last step.
Using the above in \eqref{eq_uni_pf_31}, shows that the right hand side of \eqref{eq_uni_pf_31} can be absorbed into the left hand side, consequently giving us
\begin{align*}
\frac{C \mu \delta^2 }{R_+^3} & \int_{\mf{U}_>}\zeta_{a,b;\varepsilon}^p \left(|u_p \partial_{u_p} z|^2 + |v_p \partial_{v_p} z|^2 + f_p g^{ab} \slashed\nabla_a^p z \slashed\nabla_b^p z - f_p g^{CD} \tilde{\nabla}_C^p z \tilde{\nabla}_D^p z \right) \\
& + \frac{C\delta a^2}{R_+^2} \int_{\mf{U}\cap \mf{D}_p} \zeta_{a,b;\varepsilon}^p z^2 \leqslant 0.
\end{align*}
Then we drop the first order terms from the left hand side, since they are positive, and conclude that
\[ \frac{C\delta a^2}{R_+^2} \int_{\mf{U}\cap \mf{D}_p}\zeta_{a,b;\varepsilon}^p z^2 \leqslant 0, \]
which implies that $z = 0$ on $\mf{U}\cap \mf{D}_p$.
\end{proof}

Using the same ideas as the above theorem, we can prove the following result which addresses the uniqueness problem from interior data. Note that, we will now need to use Theorem \ref{thm_carl_int}.
\begin{theorem}[Interior] \label{thm_uc_int}
Let $T>R_+$. Let $z_1$ and $z_2$ be two solutions of \eqref{eq_main_z_A} and let $W_p^\varepsilon$ be given by \eqref{eq_def_W_pe}. Then, we have the following cases for uniqueness from interior data
\begin{enumerate}

\item  If $(z_1,\nb_t z_1 ) = (z_2,\nb_t z_2 )$ on $W_p^\varepsilon$, then $z_1=z_2$ on $\mf{U} \cap \mf{D}_p$.

\item  If $(z_1,\nb_x z_1 ) = (z_2,\nb_x z_2 )$ on $W_p^\varepsilon$, then $z_1=z_2$ on $\mf{U} \cap \mf{D}_p$.

\end{enumerate}
\end{theorem}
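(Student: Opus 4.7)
The plan is to mirror the strategy of \cref{thm_uc_bdry_Dp} but to invoke the interior Carleman estimate \cref{thm_carl_int} in place of the boundary one. First, define $z := z_1 - z_2$ so that $z$ satisfies the homogeneous system \eqref{eq_uni_main}, with $z|_{\p\mf{U}}=0$. For case (1), the hypothesis $(z_1,\nb_t z_1)=(z_2,\nb_t z_2)$ on $W_p^\varepsilon$ translates to $z=0$ and $\nb_t z=0$ on $W_p^\varepsilon$; in particular, the two interior terms appearing on the right-hand side of \eqref{eq.carleman_est}, namely the $\nb_t z$ and the $z$ integrals over $W_p^\varepsilon$, vanish identically. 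This is the key structural advantage given by the interior Carleman estimate.

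Next, I would pick the same constants as in \eqref{eq_a_choice}--\eqref{eq_varep}: $\varepsilon:=\delta^2 R_+^{-1}$, $b:=\delta R_+^{-1}$, and $a$ sufficiently large relative to $(m+n)^2,\ R_+,\ \delta^{-\frac{1}{3}} M_0^{\frac{2}{3}} R_+^{\frac{4}{3}}$, and $\mu^{-1}\delta^{-2} M_1^2 R_+^4$ (with $\mu$ as in the previous proof), together with the additional requirement $a\gg R_+$ needed in \cref{thm_carl_int}. Applying \cref{thm_carl_int} with $R := R_+$ and using \eqref{eq_uvf_est} to extract an extra $R_+^{-2}$ factor on the left, one obtains an inequality of the same form as \eqref{eq_uni_pf_1} but with the boundary term on the right-hand side replaced by the two interior terms from \eqref{eq.carleman_est}. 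Under the hypothesis of case (1), those two interior terms vanish, leaving only the bulk $\frac{1}{a}\int \zeta_{a,b;\varepsilon}^p f_p |\square z|^2$ term on the right.

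The remainder of the argument now proceeds exactly as in \cref{thm_uc_bdry_Dp}. Using \eqref{eq_uni_main} one writes
\[\tfrac{1}{a}\!\int_{\mf{U}\cap\mf{D}_p}\!\zeta_{a,b;\varepsilon}^p f_p |\square z|^2 \leqslant \tfrac{2}{a}\!\int_{\mf{U}\cap\mf{D}_p}\!\zeta_{a,b;\varepsilon}^p f_p |\nb_{\mc{X}} z|^2 + \tfrac{2}{a}\!\int_{\mf{U}\cap\mf{D}_p}\!\zeta_{a,b;\varepsilon}^p f_p |V z|^2 =: I_1 + I_0.\]
The zeroth-order piece $I_0$ is absorbed into the $\zeta_{a,b;\varepsilon}^p z^2$ term on the left thanks to the choice of $a$ relative to $M_0$. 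For $I_1$, I would split $\mf{U}\cap\mf{D}_p$ into $\mf{U}_\leqslant$ and $\mf{U}_>$ as in the boundary proof, noting that assumption \eqref{assump_X} forces $\mc{X}$ to vanish on $\mf{U}_\leqslant$, while on $\mf{U}_>$ we have the lower bounds $-u_p,\,v_p\gtrsim \mu/R_+$, which allow the first-order term $I_1$ to be absorbed into the first-order part of the left-hand side by using the size of $a$ relative to $\mu^{-1}\delta^{-2} M_1^2 R_+^4$. One then drops the (non-negative) first-order terms and concludes $\int_{\mf{U}\cap\mf{D}_p}\zeta_{a,b;\varepsilon}^p z^2 = 0$, so $z \equiv 0$ on $\mf{U}\cap\mf{D}_p$.

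For case (2), the hypothesis gives $z=0$ and $\nb_x z=0$ on $W_p^\varepsilon$; here I would invoke the remark immediately following \cref{thm_carl_int}, which provides the analogous interior Carleman estimate with $\nb_x z$ replacing $\nb_t z$ on the right-hand side (obtained by swapping the roles of $t$ and $x$ after equation \eqref{eq:g2_MB}). With this variant in hand, the argument is identical. The main technical obstacle throughout is the same as in the boundary case: handling the first-order term $\nb_{\mc{X}} z$ via the splitting of the domain near $\p\mf{D}_p$, since the Carleman weight degenerates there; everything else is a mechanical adaptation of \cref{thm_uc_bdry_Dp} once the correct Carleman inequality is in place.
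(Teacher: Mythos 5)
Your proposal is correct and takes essentially the same route as the paper, which itself only remarks that the boundary-term integrals over $\Gamma_p^\varepsilon$ are replaced by the interior integrals over $W_p^\varepsilon$, that these vanish under the hypothesis, and that the rest follows as in \cref{thm_uc_bdry_Dp}. Your write-up simply fills in the details the paper leaves implicit (the choice of constants including $a\gg R$, the absorption of $I_0$ and $I_1$, and the use of the remark after \cref{thm_carl_int} for case (2)), all of which are the intended steps.
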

The only point where the proof of the above result differs from that of \cref{thm_uc_bdry_Dp} is that now instead of integrals over $\Gamma_p^\varepsilon$, we will now have integrals over $W_p^\varepsilon$. However, using the hypothesis, the integrals over $W_p^\varepsilon$ will vanish and the rest of the proof follows in a similar manner as \cref{thm_uc_int}.

\begin{remark} \label{rem_compare}
We will now give a comparison between the preliminary version of our result \cref{thm_prelim} and the precise versions \cref{thm_uc_bdry_Dp} and \cref{thm_uc_int}. The difference between $\Gamma_p$ and $\Gamma_p^\varepsilon$ can be made arbitrarily small. Indeed, the choice of $\varepsilon$ in \eqref{eq_varep} implies that
\[ ( 1 - \varepsilon r_p ) \mc{N} f_p + \varepsilon f_p \mc{N} r_p = \left( 1 - \frac{\delta^2}{R_+} r_p \right) \mc{N} f_p + \frac{\delta^2}{R_+} f_p \mc{N} r_p \xrightarrow{\delta \searrow 0} \mc{N} f_p. \]
Thus, we can choose sufficiently small $\delta$ such that $\overline{\Gamma_p^\varepsilon} \subset \Theta_p$, which is the region stated in \cref{thm_prelim}. Similarly, for the interior case we get $W_p^\varepsilon \subset W_p$. These set inclusions are proper subsets. Thus, the best uniqueness results are obtained from $\Gamma_p^\varepsilon$ (for boundary) and $W_p^\varepsilon$ (for interior).
\end{remark}

\section{Time-dependent domains} \label{sec_timedep}
In this section we show how the uniqueness results stated above can be generalised to time dependent domains. We only show the case when the change is along any one temporal coordinate. Without loss of generality, we assume that the domain is changing along the direction $t_1$.

\begin{definition}[Time dependent domain]
Let $T>0$ and let $G'$ be a hypercube of side $T$ in $\R^{m-1}$
\[ G':= \{ (t_2,\ldots,t_m) \in \R^{m-1} | -T < t_i < T, \text{ for } 2 \leqslant i \leqslant m \} \]
We will consider domains $\U \subset \Rmn$ of the following type. Assume that the boundary of $\U$, denoted by $\p\U$, is smooth and timelike. 
For each $t_1 \in (-T,T)$, we define $\Omega_{t_1}$ to be a non-empty, open, and bounded subset of $\R^n$ satisfying
\[\Omega_{t_1}:= \{ (x_1,\ldots,x_n) \in \R^n | \text{ for } (t_2,\ldots,t_m)\in G', \text{ we have } (t_1, t_2 \ldots, t_m,x_1,\ldots,x_n) \in \U \} \subset \R^n.\]
That is, we can write $\U$ as
\[ \mf{U} := \bigcup_{t_1\in (-T,T)} \{t_1\} \times G' \times \Omega_{t_1}. \]
Also assume that there exists a smooth future-pointing timelike vector field $\mc{Y}$ on $\bar{\U}$, such that $\mc{Y}|_{\bar{\U}}$ is tangent to $\p\U$. 
\end{definition}
Now suppose we are given a time dependent domain $\U$. Let us restrict ourselves to a particular $\Omega_{t_1}$, say $\Omega_0$. Denote the coordinate system on $\Omega_0$ by $(x_1,\ldots,x_n)$. Then we can transfer these coordinates to any level of $\U$ by moving them along the integral curves of $\mc{Y}$. With respect to the coordinate system $(t_1,t_2,\ldots,t_m,x_1,\ldots,x_n)$, we see that $\U$ can be equivalently written as $\U \simeq [-T,T] \times G' \times \Omega_0$. Hence, we have the transformation of the coordinate system on $\U$ given by $(\mc{Y},t_2,\ldots,t_m,x_1,\ldots,x_n)$ into the moving coordinate system on $[-T,T] \times G' \times \Omega_0$ given by $(t_1,t_2,\ldots,t_m,x_1,\ldots,x_n)$. Thus, we can re-parametrize $\U$ into a time static domain.

With the above definition and explanation, we can obtain the uniqueness results of the previous section, analogously. To avoid repetition, we do not write the statements or proofs again. The uniqueness results are precisely the ones given in \cref{thm_uc_bdry_Dp} and \cref{thm_uc_int}. Finally, comparing with \eqref{eq_N_static}, for the time dependent setting the corresponding calculation is
\begin{align*}
\mc{N} f_p = \frac{1}{4} \mc{N} (|x_p|^2 - |t_p|^2) = \frac{1}{4} & \left( \nu^{t_1} \p_{t_1} + \sum_{j=1}^n \nu^{x_j} \p_{x_j} \right) (|x_p|^2 - |t_p|^2) \\
& = \frac{1}{4} \left[ \sum_{j=1}^n \nu^{x_j} \p_{x_j} |x_p|^2 - \nu^{t_1} \p_{t_1} |t_p|^2 \right] = \frac{1}{2} ( \nu^x \cdot x_p - \nu^{t_1} \cdot t_{p,1}). \numberthis  \label{eq_N_moving}
\end{align*}

\section{Concluding remarks} \label{sec_conc}
It would be interesting to remove the assumption that $\X$ vanishes on the null cone. The main reason for this assumption is that our Carleman weight vanishes on the cone. A similar issue was resolved in \cite{MR3964826, MR4314050}, which address control problems for waves, by appropriately using energy estimates. This does not seem helpful here because we do not have any energy estimate for \eqref{eq_main_z_A}. It seems that one needs to use a different Carleman weight to solve the problem when $\X$ may not vanish on the cone.

Another direction is the problem of showing uniqueness inside the cone. For this purpose, we either need an energy estimate or we have to somehow show that the solution inside the cone is determined by the data on the cone. Both of these are undetermined till now. The only possible direction we have till now is that the function that we have used to define the Carleman weight satisfies the pseudoconvexity condition even inside the cone. Thus, it would be interesting to see if one can obtain a suitable Carleman estimate inside the cone.

Finally, we mention that the controllability of ultrahyperbolic equations is an unexplored area and to the best of the author's knowledge there are no results in this direction. As we mentioned earlier, the main obstacle is well-posedness.  In the presence of multiple time dimensions, even the concept of evolution of the system needs to be described properly. One idea is to consider evolution in the first time component only. Moreover, there needs to be a compatibility condition between the prescribed data, as normally this system is over-determined.

\bibliographystyle{siam}
	\bibliography{references}

\end{document}